\DeclareMathOperator{\tr}{tr}
\newcommand{\vp}{\varphi}
\newcommand{\pa}{\partial}
\newcommand{\na}{\nabla}
\newcommand{\al}{\alpha}
\newcommand{\wtl}{\widetilde}
\newcommand{\ov}{\overline}
\newcommand{\la}{\lambda}
\newcommand{\lra}{\longrightarrow}
\newtheorem{thm}{Theorem}[section]
\newtheorem{cor}[thm]{Corollary}
\newtheorem{prop}[thm]{Proposition}
\theoremstyle{definition}
\newtheorem{defn}[thm]{Definition}
\newtheorem{remk}[thm]{Remark}
\numberwithin{equation}{section}
\begin{document}
\title{Schwarz lemma: the case of equality and an extension}
\author{Haojie Chen, Xiaolan Nie}
\address{Department of Mathematics, Zhejiang Normal University, Jinhua Zhejiang, 321004, China}
\email{chj@zjnu.edu.cn, \ nie@zjnu.edu.cn }
\date{}
\maketitle

\vspace*{-.5cm}

\begin{center}
\textit{Dedicated to Professor Peter Li on the occasion of his 70th birthday}
\end{center}

\begin{abstract}
We prove two results related to the Schwarz lemma in complex geometry. First, we show that if the inequality in the Schwarz lemmata of Yau, Royden and Tosatti becomes equality at one point, then the equality holds on the whole manifold.  In particular, the holomorphic map is totally geodesic and has constant rank. In the second part, we study the holomorphic sectional curvature on an almost Hermitian manifold and establish a Schwarz lemma in terms of holomorphic sectional curvatures in almost Hermitian setting. 

%\noindent \textbf{Keywords} Schwarz lemma, Holomorphic sectional curvature, Almost Hermitian manifolds\\

%\noindent \textbf{MSC(2010)}  53C55, 32Q05

\end{abstract}

\section{Introduction} 
The Ahlfors-Schwarz lemma \cite{Ah} is one of the most important results in complex analysis and differential geometry. It says that if $f$ is a holomorphic map from the unit disk $\mathbb D$ into a Riemann surface with a conformal metric $ds^2$ with Gaussian curvature at most $-1$, then $f$ is distance decreasing in the sense that 
\begin{align} \label{1.1} f^*ds^2\leq ds_{\mathbb D}^2,\end{align} where $ds_{\mathbb D}^2$ is the Poincar\'{e} metric on $\mathbb D$. In \cite{Hein}, Heins proved the strong form of the Ahlfors-Schwarz lemma: if the equality of $(\ref{1.1})$ holds at one point in $\mathbb D$, then it holds on $\mathbb D$ and $f$ is an isometric immersion. This generalizes the equality case of the classical Schwarz lemma. Later, the result was also proved by Royden \cite{RO2} and Minda \cite{Mi} using different methods (see also e.g. \cite{BK},\cite{LWT},\cite{BKO} for the boundary rigidity of the Schwarz lemma). Among other applications, the strong form of the Ahlfors-Schwarz lemma has been applied to study the Bloch constant problem \cite{Hein}, which is one of the most intriguing open problems in complex analysis of one variable.

{\let\thefootnote\relax\footnotetext{This research is partially supported by NSFC grants No. 11901530, No. 11801516, No. 12071161 and Zhejiang Provincial NSF grant No. LY19A010017.}}

In complex geometry, the Ahlfors-Schwarz lemma has been generalized to higher dimensional manifolds and to more general settings, by the work of Chern \cite{Chern}, Lu \cite{Lu}, Yau \cite{Ya1}, Chen-Cheng-Look \cite{CCL}, Royden \cite{RO1}, Tosatti \cite{T1} and Ni \cite{Ni1}\cite{Ni2} etc. (see also \cite{Ko} and references therein). In the first part of the paper, we study the equality case of the Schwarz lemmata obtained by Yau, Royden and Tosatti. To be more precise, Yau \cite{Ya1} proves the following.
\begin{thm} [Yau] \label{Yau} Let $(M, g)$ be a complete K\"ahler manifold and $(N, h)$ be a Hermitian manifold. Assume that the Ricci curvature of $M$ is bounded from below by $k\leq 0$ and the holomorphic bisectional curvature of $N$ is bounded from above by $K<0$. Then for any holomorphic map $f:M\rightarrow N$, 
\begin{align}\label{Yau2}
\|df\|^2\leq \dfrac{k}{K},
\end{align}
where $\|df\|^2=\tr_g(f^*h)$ is the trace of the pull back metric $f^*h$.
\end{thm}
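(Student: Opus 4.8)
The plan is to reduce \eqref{Yau2} to the pointwise estimate $u\le k/K$ for the smooth nonnegative function
$$u:=\|df\|^2=\tr_g(f^*h)$$
on $M$, and to deduce this estimate from a Bochner-type differential inequality together with a maximum-principle argument adapted to complete, possibly noncompact, manifolds. If $f$ is constant then $u\equiv 0$ and \eqref{Yau2} is trivial since $k/K\ge 0$, so assume $f$ is nonconstant. The analytic core is the Chern--Lu inequality: at every point of $M$ where $u>0$,
\begin{align}\label{chernlu}
\De_g\log u\ \ge\ k-Ku,
\end{align}
where $\De_g$ denotes the complex Laplacian of the K\"ahler metric $g$.

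To establish \eqref{chernlu} one computes $\pa\bap u$ in local holomorphic coordinates $z^i$ on $M$ and $w^\al$ on $N$, using that $f$ is holomorphic (so the antiholomorphic derivatives $\pa f^\al/\pa\bz^i$ vanish) and that $g$ is K\"ahler (so that covariant derivatives of $g$-tensors commute up to the curvature $\Rm^g$). Writing $u=g^{i\bj}h_{\al\bar\beta}f^\al_i\ov{f^\beta_j}$ with $f^\al_i:=\pa f^\al/\pa z^i$, differentiating twice, and organizing the result, the first-derivative terms of $df$ combine via the Cauchy--Schwarz inequality into a nonnegative remainder which is discarded; what survives are two curvature contributions, a contraction of $\Ric_g$ against the semipositive form $f^*h$ (entering with a $+$ sign) and the holomorphic bisectional curvature of $N$ evaluated along $df$ (entering with a $-$ sign). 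Diagonalizing $f^*h$ with respect to $g$, with eigenvalues $\mu_1,\dots,\mu_n\ge 0$ satisfying $\sum_i\mu_i=u$, the hypothesis $\Ric_g\ge k\,g$ bounds the first contribution below by $ku$, and the hypothesis that the holomorphic bisectional curvature of $N$ is $\le K$ bounds the second, after summing over the diagonalizing frame, above by $Ku^2$; dividing by $u>0$ yields \eqref{chernlu}. This computation is the main obstacle: it requires careful bookkeeping of the connection and curvature terms, and in particular one must verify that the torsion terms appearing because $h$ is only Hermitian, not K\"ahler, do not obstruct the estimate; the remainder of the argument is comparatively soft.

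Since $M$ need not be compact, one localizes rather than evaluating \eqref{chernlu} at a global maximum of $u$. Fix $p\in M$, put $\rho=d_g(p,\cdot)$, and for $a>0$ let $\psi=(1-\rho^2/a^2)^2$ on the geodesic ball $B(p,a)$; this is a nonnegative cutoff with $\psi(p)=1$ vanishing on $\pa B(p,a)$, so $F:=\psi u$ attains its maximum over $\ov{B(p,a)}$ at an interior point $q$. If $F(q)=0$ then $u\equiv 0$ on $B(p,a)$, hence $df\equiv 0$ there, and since $f$ is holomorphic on the connected manifold $M$ it is constant, so \eqref{Yau2} holds trivially; thus assume $F(q)>0$, so $u>0$ near $q$. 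There $\na\log F=0$ and $\De_g\log F\le 0$, in the barrier sense via Calabi's trick if $q$ lies on the cut locus. Writing $\log F=\log\psi+\log u$, substituting \eqref{chernlu}, and using the Laplacian comparison $\De_g\rho^2\le C(n)\bigl(1+\sqrt{|k|}\,a\bigr)$ on $B(p,a)$ — available precisely because $\Ric_g\ge k$ — a short computation gives $-K\,F(q)\le |k|+C'(n)\bigl(a^{-2}+\sqrt{|k|}\,a^{-1}\bigr)$. Hence, for any fixed $x\in M$ and all $a$ with $x\in B(p,a)$,
$$u(x)=\frac{F(x)}{\psi(x)}\le\frac{F(q)}{\psi(x)}\le\frac{|k|+C'(n)\bigl(a^{-2}+\sqrt{|k|}\,a^{-1}\bigr)}{-K\,\psi(x)}.$$
Letting $a\to\infty$, we have $\psi(x)\to 1$ and the error term tends to $0$, so $u(x)\le |k|/(-K)=k/K$. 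As $x$ was arbitrary, $\|df\|^2\le k/K$ on $M$, which is \eqref{Yau2}. (Equivalently, once this cutoff argument shows $u$ is bounded, one could conclude by invoking Yau's generalized maximum principle directly, as in \cite{Ya1}.)
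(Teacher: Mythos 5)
The paper does not actually prove this statement: Theorem \ref{Yau} is quoted from \cite{Ya1} as background, so there is no in-paper proof to compare against. Your argument is the standard one --- the Chern--Lu inequality $\Delta\log u\ge k-Ku$ where $u>0$, combined with the cutoff $(1-\rho^2/a^2)^2$, the Laplacian comparison under $\mathrm{Ric}\ge k$, and letting $a\to\infty$ --- and it is correct; it is also precisely the machinery this paper relies on elsewhere (the inequality $\Delta v\ge 2(kv-Kv^2)$ imported from \cite{T1} in Section 3, and the identical cutoff-plus-comparison scheme in the proof of Theorem \ref{T1.1}). The one point you flag as needing verification, the torsion of $h$, is in fact harmless: since $f$ is holomorphic, commuting the pulled-back Chern covariant derivatives produces only the Chern curvature of $(N,h)$ (whose bisectional curvature is exactly what is bounded by $K$), with no surviving torsion terms, which is why Yau could take $N$ merely Hermitian.
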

In \cite{RO1}, Royden improves Yau's Schwarz lemma when $N$ is K\"ahler.
\begin{thm} [Royden] \label{Ro1} Assume that $N$ is K\"ahler. In Theorem \ref{Yau}, if the condition of the holomorphic bisectional curvature of $N$ is replaced by that the holomorphic sectional curvature of $N$ is bounded from above by $K<0$, then
\begin{align}\label{Ro2}
\|df\|^2\leq\dfrac{2d}{d+1} \dfrac{k}{K},
\end{align}
where $d=\max\limits_{p\in M} rk(df(p))$ is the maximal complex rank of $df$.
\end{thm}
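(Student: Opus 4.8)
The plan is to combine the Bochner technique with Yau's generalized maximum principle — the same scheme that proves Theorem \ref{Yau} — the only new ingredient being an algebraic lemma of Royden that estimates the curvature contribution of $N$ in terms of its holomorphic sectional curvature together with the rank bound. Set $u=\|df\|^2=\tr_g(f^*h)$; we may assume $u\not\equiv 0$. Fix $x\in M$, choose holomorphic normal coordinates for $g$ at $x$ and for $h$ at $f(x)$, and put $X_i=df(\partial/\partial z^i)\in T^{1,0}_{f(x)}N$ for $i=1,\dots,m=\dim_{\mathbb C}M$, so that $u(x)=\sum_i|X_i|^2$ and $(f^*h)_{i\bar j}(x)=\langle X_i,X_j\rangle$. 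Since $M$ is K\"ahler and $f$ is holomorphic, the standard Bochner formula for holomorphic maps (see \cite{Ya1}), after dropping the nonnegative Hessian term $\|\nabla df\|^2$ and using $\Ric_g\ge k\,g$, yields
\begin{equation*}
\Delta u\ \ge\ k\,u-Q,\qquad Q:=\sum_{i,j=1}^{m}R^N\!\big(X_i,\overline{X_i},X_j,\overline{X_j}\big).
\end{equation*}

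The heart of the proof is to estimate $Q$ using only the bound $K$ on the holomorphic sectional curvature of $N$. This is Royden's algebraic lemma \cite{RO1}: for any K\"ahler curvature tensor $R^N$ with holomorphic sectional curvature $\le K$ and any vectors $v_1,\dots,v_m\in T^{1,0}_{f(x)}N$,
\begin{equation*}
\sum_{i,j}R^N\!\big(v_i,\overline{v_i},v_j,\overline{v_j}\big)\ \le\ \frac K2\Big[\Big(\sum_i|v_i|^2\Big)^2+\sum_{i,j}\big|\langle v_i,v_j\rangle\big|^2\Big].
\end{equation*}
To prove it, subtract the constant-holomorphic-sectional-curvature-$K$ K\"ahler tensor $R^0_{\alpha\bar\beta\gamma\bar\delta}=\tfrac K2(h_{\alpha\bar\beta}h_{\gamma\bar\delta}+h_{\alpha\bar\delta}h_{\gamma\bar\beta})$, so that $\widetilde R:=R^N-R^0$ has nonpositive holomorphic sectional curvature. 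For $w=\sum_i\zeta_iv_i$ with $|\zeta_i|=1$, average the inequality $\widetilde R(w,\overline w,w,\overline w)\le 0$ over the arguments of the $\zeta_i$ independently; by the K\"ahler symmetries the surviving terms collapse to $2\sum_{i,j}\widetilde R(v_i,\overline{v_i},v_j,\overline{v_j})-\sum_i\widetilde R(v_i,\overline{v_i},v_i,\overline{v_i})\le 0$, and since $\widetilde R(v_i,\overline{v_i},v_i,\overline{v_i})\le 0$ for each $i$ this gives $\sum_{i,j}\widetilde R(v_i,\overline{v_i},v_j,\overline{v_j})\le 0$; expanding $R^0$ back in produces the stated inequality.

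Now apply the lemma at $x$ with $v_i=X_i$. The Gram matrix $\big(\langle X_i,X_j\rangle\big)=f^*h(x)$ is positive semidefinite of rank $\mathrm{rk}\,df(x)\le d$, hence $\sum_{i,j}|\langle X_i,X_j\rangle|^2=\tr\!\big((f^*h)^2\big)\ge\tfrac1d\big(\tr f^*h\big)^2=\tfrac1d u^2$; since $K<0$ this gives $Q\le\tfrac K2\big(u^2+\tfrac1d u^2\big)=\tfrac{d+1}{2d}Ku^2$, and therefore $\Delta u\ge u\big(k-\tfrac{d+1}{2d}Ku\big)$ on all of $M$. Because $M$ is complete with Ricci curvature bounded below, Yau's generalized maximum principle furnishes points $p_j$ with $u(p_j)\to\sup_M u$ and $\Delta u(p_j)\le 1/j$; passing to the limit gives $0\ge(\sup_M u)\big(k-\tfrac{d+1}{2d}K\sup_M u\big)$, and since $k\le0$ and $K<0$ this forces $\sup_M u\le\tfrac{2d}{d+1}\tfrac kK$, which is $(\ref{Ro2})$.

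The \emph{main obstacle} is the algebraic lemma, and specifically the sharp constant $\tfrac{d+1}{2d}$: it arises from balancing the cross terms produced by polarizing the holomorphic sectional curvature against the fact that $f^*h$ has rank at most $d$ — which is precisely where $d=\max_p\mathrm{rk}\,df(p)$ enters. A secondary, technical point is that Yau's maximum principle is stated for functions bounded above; when $\sup_M u<\infty$ is not known a priori, one first derives a bound for $u$ over geodesic balls, uniform in the radius, by the standard cutoff argument combined with the Laplacian comparison theorem (available since $\Ric_g\ge k$) — equivalently, one may apply the maximum principle to a suitable bounded function of $u$, using the refined inequality $\Delta\log u\ge k-\tfrac{d+1}{2d}Ku$, which exploits the Hessian term discarded above via $\|\nabla df\|^2\ge|\nabla u|^2/u$.
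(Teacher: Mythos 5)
The paper does not actually prove Theorem \ref{Ro1}: it is quoted from Royden \cite{RO1}, and the only piece of its proof the paper later invokes is the refined inequality $\Delta\log\|df\|^2\ge k-\frac{rk+1}{2\,rk}K\|df\|^2$ (Proposition 4 of \cite{RO1}), so there is no internal proof to compare against. Your argument is a correct reconstruction of Royden's original one — the Chern--Lu/Bochner inequality, the polarization--averaging lemma bounding $\sum_{i,j}R^N(v_i,\overline{v_i},v_j,\overline{v_j})$ by the holomorphic sectional curvature bound, the rank inequality $\tr\bigl((f^*h)^2\bigr)\ge\frac1d\bigl(\tr f^*h\bigr)^2$, and Yau's maximum principle with the standard localization to handle the a priori unboundedness of $\|df\|^2$. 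The only (harmless) deviation is that you state the algebraic lemma for arbitrary vectors via the Gram matrix, whereas Royden states it for mutually orthogonal ones; your averaging proof indeed works in that generality, and in the application one could equivalently first diagonalize $f^*h$ against $g$ at the point.
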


In \cite{T1} Tosatti extends Yau's Schwarz lemma from a different direction. Namely, he considers almost Hermitian manifolds and proves the following.
\begin{thm}[Tosatti]\label{To2}
Let $(M, J, g)$ be a complete almost Hermitian manifold with second Ricci curvature bounded from below by $k\leq 0$, and with torsion and $(2,0)$ part of the curvature bounded. Let $(N, J_N, h)$ be an almost Hermitian manifold with holomorphic bisectional curvature bounded from above by $K
<0$. Then for any non-constant holomorphic map $f:M\rightarrow N$,  
\begin{align} \label{to3}
\|df\|^2\leq \dfrac{k}{K}.
\end{align}
\end{thm}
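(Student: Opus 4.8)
The strategy is to run Yau's maximum-principle argument, adapted to the canonical (Chern) connections of the almost Hermitian manifolds $M$ and $N$ and with careful bookkeeping of torsion. Set $u=\|df\|^2=\tr_g(f^*h)$; since $f$ is holomorphic, $df$ is complex linear, i.e.\ $\bap f=0$, so $df$ is recorded by its $(1,0)$-part $\partial f$ and $u=|\partial f|^2_{g,h}$. As $f$ is non-constant, $u\not\equiv 0$; on the open set $\{u>0\}$ we work with $\log u$, while where $u=0$ the inequality $u\le k/K$ is trivial, so it suffices to show $\sup_M u\le k/K$.

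First I would establish a Chern--Lu type Bochner inequality. Differentiating $u$ twice with the canonical connection on $M$ and the $f$-pullback of the canonical connection on $N$, commuting covariant derivatives, and using $\bap f=0$, one finds that $\De u$ (the Chern Laplacian) equals a non-negative Hessian term $|\na^{1,0}\partial f|^2$, plus the second Chern--Ricci curvature of $M$ contracted with $f^*h$, minus a curvature term of $N$ quadratic in $f^*h$, plus error terms that are linear in the torsions of $M$ and $N$ and in the $(2,0)$-part of the curvature of $M$, contracted with $\partial f$ and $\na\partial f$. (On an almost complex manifold the canonical connection carries torsion coming both from non-Kählerness and from the Nijenhuis tensor, and its curvature need not be of pure type $(1,1)$, which is exactly why these error terms, and the hypotheses controlling them, appear.) Now $\Ric^{(2)}_M\ge k$ makes the $M$-curvature term $\ge ku$; diagonalizing $f^*h$ with respect to $g$ at a point and using that the holomorphic bisectional curvature of $N$ is $\le K<0$ makes the $N$-curvature term $\le Ku^2$. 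The error terms are then estimated by Cauchy--Schwarz against $|\na^{1,0}\partial f|^2$ using the assumed bounds on torsion and $(2,0)$-curvature; combining this with $\De\log u=\De u/u-|\na u|^2/u^2$ and the Cauchy--Schwarz bound $|\na u|^2\le u\,|\na^{1,0}\partial f|^2$, one aims at a differential inequality of the form
\begin{align*}
\De\log u\ \ge\ k-Ku-C\bigl(|\na\log u|+|\na\log u|^2\bigr)\qquad\text{on }\{u>0\},
\end{align*}
with $C$ depending only on the fixed bounds.

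It then remains to invoke the maximum principle. The hypotheses ($\Ric^{(2)}_M\ge k$, bounded torsion and $(2,0)$-curvature) imply, via the standard comparison between Riemannian and Chern curvature, that the Levi-Civita Ricci curvature of the complete manifold $(M,g)$ is bounded from below, while $\De$ and the Levi-Civita Laplacian differ only by a first-order operator with bounded coefficients. One may therefore apply the Omori--Yau generalized maximum principle to $\log u$---reducing, if $u$ is not a priori bounded above, to geodesic balls via a Cheng--Yau cutoff function whose Laplacian is controlled by the Ricci lower bound, and then letting the radius go to infinity. At the resulting near-maximum points $x_j$ one has $\log u(x_j)\to\sup\log u$, $|\na\log u(x_j)|\to 0$ and $\De\log u(x_j)\le o(1)$; feeding this into the displayed inequality, the error terms tend to zero and one is left with $0\ge k-K\sup_M u$, hence $\sup_M u\le k/K$ because $K<0$.

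The step I expect to be the main obstacle is controlling the torsion and $(2,0)$-curvature error terms \emph{with the sharp constant}: a crude Cauchy--Schwarz leaves a residual term of size $Cu$ that turns $k-Ku$ into $k-(K+C)u$ and degrades or destroys the sharp bound, so one must exploit the cancellations already built into the definition of the second Chern--Ricci curvature and split the cross terms so that everything not of the form ``(vanishing at the Omori--Yau points)'' is absorbed by the positive Hessian term $|\na^{1,0}\partial f|^2$. A secondary technical point is justifying the maximum principle itself in the non-Kähler, non-integrable setting, i.e.\ extracting the Riemannian Ricci lower bound and the comparison of Laplacians from the stated boundedness hypotheses.
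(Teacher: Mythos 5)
You should first note that the paper does not prove this theorem: it is quoted from Tosatti \cite{T1}, and the only ingredients the paper itself invokes are the Chern--Lu inequality (5.41) of \cite{T1}, namely $\Delta\|df\|^2\geq 2(k\|df\|^2-K\|df\|^4)$ for the complex (Chern) Laplacian, and Tosatti's Laplacian comparison theorem for the distance function. Measured against that actual proof, your overall architecture (Bochner inequality for the canonical connections plus a maximum principle on the complete manifold) is the right one, but you have misplaced where the hypotheses on torsion and the $(2,0)$-part of the curvature enter. They are \emph{not} needed to control error terms in the Bochner formula: because the canonical connection has vanishing $(1,1)$-torsion and preserves $T^{1,0}$, holomorphicity of $f$ gives $\nabla_{\bar k}f_i^\alpha=0$, the commutators produce only the second Ricci curvature of $M$ and the $(1,1)$ (bisectional) curvature of $N$, and the inequality $\Delta\|df\|^2\geq 2(k\|df\|^2-K\|df\|^4)$ comes out clean. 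The ``main obstacle'' you flag --- absorbing torsion cross terms with the sharp constant --- does not arise.

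The genuine gap is in your maximum principle step. You assert that $\Ric^{(2)}\geq k$ together with bounded torsion and bounded $(2,0)$-curvature implies a lower bound on the Levi-Civita Ricci curvature, ``via the standard comparison between Riemannian and Chern curvature.'' That comparison involves \emph{derivatives} of the torsion (schematically $R^{LC}=R^{Chern}+\nabla T+T\ast T$), which are not bounded by the hypotheses, so you cannot extract a Riemannian Ricci lower bound and hence cannot invoke Omori--Yau or a Cheng--Yau cutoff in their Riemannian form. This is precisely why the torsion and $(2,0)$-curvature bounds appear in the statement: Tosatti proves a Laplacian comparison theorem directly for the \emph{Chern} Laplacian of $r$ (second variation along minimal geodesics with $J$-adapted frames, integrating the $\nabla T$ terms by parts so that only $|T|$ and the $(2,0)$-curvature are needed), and then runs Yau's argument by maximizing $(a^2-r^2)^2\|df\|^2$ on geodesic balls and letting $a\to\infty$. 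Without that comparison theorem --- which is the real technical content and which your proposal replaces by an unjustified reduction to the Riemannian setting --- the argument does not close.
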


We prove the following result which generalizes the equality case of the Ahlfors-Schwarz lemma. 
\begin{thm} \label{them1} Assume that the equality holds at one point in $(\ref{Yau2}) or (\ref{Ro2}) or (\ref{to3})$. Then it holds on the whole manifold $M$ and $f$ is a totally geodesic map with constant rank. 
\end{thm}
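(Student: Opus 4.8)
The plan is to combine the strong maximum principle with the Bochner--Chern--Lu identity underlying each of the three Schwarz lemmata. We may assume $M$ is connected and $k<0$: when $k=0$ the respective inequality forces $\|df\|^{2}\equiv 0$, so $f$ is constant and there is nothing to prove. Set $u=\|df\|^{2}$, let $c_{0}=1$ in the Yau and Tosatti cases and $c_{0}=\tfrac{d+1}{2d}$ in the Royden case, and put $u_{0}=\tfrac{k}{c_{0}K}>0$, so that the sharp bound reads $u\le u_{0}$ and we assume $u(p_{0})=u_{0}$ for some $p_{0}\in M$.

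\emph{Step 1: equality propagates to all of $M$.} The proofs of Theorems \ref{Yau}, \ref{Ro1} and \ref{To2} all produce, on the open set $\Omega=\{u>0\}$, a differential inequality
\begin{align*}
\mathcal L\log u\ \ge\ k-c_{0}Ku ,
\end{align*}
where $\mathcal L=\Delta_{g}$ in the K\"ahler cases, while in Tosatti's almost Hermitian setting $\mathcal L=\Delta_{g}+\ag V,\nabla\,\cdot\,\rg$ is a second order elliptic operator with no zeroth order term. Since $u(p_{0})=u_{0}>0$ we have $p_{0}\in\Omega$; on $\Omega$ put $\phi=\log u-\log u_{0}\le 0$, so $\phi(p_{0})=0$. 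From $1+\phi\le e^{\phi}$ and $k<0$ one gets $k(1-e^{\phi})\ge -k\phi$, hence
\begin{align*}
\mathcal L\phi\ \ge\ k-c_{0}Ku\ =\ k-ke^{\phi}\ =\ k(1-e^{\phi})\ \ge\ -k\phi ,
\end{align*}
that is $\mathcal L\phi+k\phi\ge 0$ on $\Omega$ with $k\le 0$. As $\phi$ attains its maximum value $0$ (which is nonnegative) at the interior point $p_{0}$, the strong maximum principle forces $\phi\equiv 0$ on the connected component $\Omega_{0}\ni p_{0}$ of $\Omega$, i.e. $u\equiv u_{0}$ there. Since $u_{0}>0$, continuity makes $\Omega_{0}$ both open and closed in $M$, so $\Omega_{0}=M$ and $u\equiv u_{0}$ on $M$; thus equality holds everywhere. (In contrast to the original proofs, no Omori--Yau type maximum principle is needed here, because the supremum of $u$ is attained.)

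\emph{Step 2: $f$ is totally geodesic.} Each of the three results rests on a Bochner--Chern--Lu identity of the form
\begin{align*}
\mathcal L u\ =\ \mathcal Q\ +\ \ag \Ric,\ f^{*}h\rg\ -\ \mathcal R^{N},
\end{align*}
where $\mathcal Q\ge 0$ is a sum of squares built from the covariant derivative of $df$, $\ag \Ric,f^{*}h\rg\ge ku$, and $\mathcal R^{N}\le c_{0}Ku^{2}$; the last bound uses the holomorphic bisectional curvature hypothesis directly in the Yau and Tosatti cases, and Royden's algebraic comparison between the bisectional contraction and the holomorphic sectional curvature in the Royden case. By the choice of $u_{0}$ we have $ku_{0}-c_{0}Ku_{0}^{2}=0$, while $u\equiv u_{0}$ gives $\mathcal L u\equiv 0$; therefore
\begin{align*}
0\ =\ \mathcal Q\ +\ \big(\ag \Ric,f^{*}h\rg-ku_{0}\big)\ +\ \big(c_{0}Ku_{0}^{2}-\mathcal R^{N}\big)\ \ge\ \mathcal Q\ \ge\ 0 ,
\end{align*}
so $\mathcal Q\equiv 0$ and the two remaining nonnegative terms vanish identically. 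Since $f$ is holomorphic, the vanishing of $\mathcal Q$ is equivalent to the vanishing of the full second fundamental form of $f$, i.e. $\nabla df\equiv 0$ and $f$ is totally geodesic. (In the Royden case the vanishing of $c_{0}Ku_{0}^{2}-\mathcal R^{N}$, together with the equality case of Royden's lemma and the fact that $u_{0}>0$ precludes $df=0$, also shows directly that $rk(df)=d$ at every point.)

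\emph{Step 3: constant rank, and the main difficulty.} Constant rank is now immediate from $\nabla df\equiv 0$: along any curve $\gamma$ in $M$, parallel transport on $TM$ and on $f^{*}TN$ intertwines $df$, namely $df_{\gamma(1)}\circ P^{TM}_{\gamma}=P^{f^{*}TN}_{\gamma}\circ df_{\gamma(0)}$, and since parallel transports are isomorphisms, $rk(df)$ is locally constant, hence constant on the connected manifold $M$. The step I expect to require the most care is the almost Hermitian case of Theorem \ref{To2}: one has to verify that the torsion and Nijenhuis corrections appearing in Tosatti's version of the Chern--Lu identity can be organized so that the slack $\mathcal Q$ is a genuine sum of squares whose vanishing forces the second fundamental form of $f$ (with respect to the canonical connections) to vanish; this is exactly where the hypotheses on the torsion and on the $(2,0)$ part of the curvature are used, and where the argument departs from the K\"ahler case.
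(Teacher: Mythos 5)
Your proposal is correct and follows essentially the same route as the paper: propagate the equality via the Hopf strong maximum principle applied to $\log(u/u_0)$ (resp.\ $u-u_0$) together with an open--closed argument, then read off $\nabla df\equiv 0$ from the vanishing of the nonnegative term in the Chern--Lu identity, and deduce constant rank. The only cosmetic differences are that the paper works with $v=\|df\|^2$ directly (rather than $\log v$) in the Tosatti case, where only $\Delta v\ge 2(kv-Kv^2)$ is quoted, and that it cites Vilms for the constant-rank conclusion where you give the parallel-transport argument explicitly.
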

The proof of the result shares some spirit of Minda in \cite{Mi}. Namely, by using the known bound of $\|df\|^2$ and analyzing the differential inequalities, we can invoke the classical strong maximum principle of linear elliptic operators to get the global equality. Then we obtain the geometric implications by the global equality (see also the appendix in \cite{Ni2}). Using the same technique, we also derive the result for the equality case for the volume version of Schwarz lemma. See Proposition \ref{prop3.1} in section 3. A special consequence of Theorem \ref{them1} is the following.
\begin{cor} \label{cor1.1}Let $S$ be a Riemann surface with a complete conformal metric $ds^2$ of Gaussian curvature at least $k<0$.  Let $(N,h)$ be a K\"ahler manifold with holomorphic sectional curvature bounded from above by $K<0$. For any non-constant holomorphic map $f:S\lra N$, if $f^*h=\dfrac{k}{K}ds^2$ at some point $p\in M$,  then $ds^2\equiv-\dfrac{1}{k}ds^2_H$ and $f$ is an isometric immersion (up to a scalar) onto the totally geodesic submanifold $f(S)$ in $N$.
\end{cor}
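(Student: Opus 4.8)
The plan is to obtain Corollary~\ref{cor1.1} by specializing Theorem~\ref{them1} in its Royden form $(\ref{Ro2})$ to a one‑dimensional domain, and then invoking the classification of complete constant‑curvature surfaces. First I would check that Royden's hypotheses hold for $f:(S,ds^2)\to(N,h)$. A conformal metric on a Riemann surface is automatically K\"ahler, since every real $(1,1)$‑form on a complex curve is closed, and in complex dimension one the Ricci curvature condition is precisely that the Gaussian curvature is $\ge k\le 0$; thus $(S,ds^2)$ is a complete K\"ahler manifold satisfying the hypotheses of Theorem~\ref{Ro1}. As $f$ is non‑constant and $\dim_{\mathbb C}S=1$, the maximal rank is $d=1$, so $(\ref{Ro2})$ reads
\begin{align*}
\|df\|^2\le \frac{2\cdot 1}{1+1}\cdot\frac{k}{K}=\frac{k}{K}.
\end{align*}
Moreover, at any point $p$ the forms $f^*h$ and $ds^2$ live on the one‑dimensional space $T_pS$, so $f^*h|_p=\|df\|^2(p)\,ds^2|_p$; hence the hypothesis $f^*h=\tfrac{k}{K}ds^2$ at $p$ says precisely that equality holds in the displayed bound at $p$.

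By Theorem~\ref{them1}, equality then holds on all of $S$, i.e.\ $\|df\|^2\equiv\tfrac{k}{K}$, equivalently $f^*h\equiv\tfrac{k}{K}ds^2$, and $f$ is a totally geodesic map of constant rank. Since $f$ is non‑constant this rank is identically $1$, so $f$ is an immersion, $f^*h$ is a genuine metric, and $f:(S,\tfrac{k}{K}ds^2)\to(N,h)$ is an isometry onto its image --- that is, $f$ is an isometric immersion up to the positive scalar $\tfrac{k}{K}$. Being totally geodesic, $f$ has vanishing second fundamental form, so $f(S)$ is a totally geodesic (immersed) submanifold of $N$ onto which $f$ maps $S$ isometrically up to a scalar.

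It remains to identify $ds^2$. Since $\|df\|^2$ is constant, $\Delta_g\log\|df\|^2\equiv 0$. Recalling the Bochner‑type inequality behind Royden's estimate, which in this situation takes the form $\Delta_g\log\|df\|^2\ge c\bigl(\kappa_S-\kappa_N\,\|df\|^2\bigr)$ for a constant $c>0$, where $\kappa_S$ is the Gaussian curvature of $(S,ds^2)$ and $\kappa_N$ is the holomorphic sectional curvature of $N$ along $df(TS)$, and using $\kappa_S\ge k$ and $\kappa_N\le K<0$, we get
\begin{align*}
0=\Delta_g\log\|df\|^2\ \ge\ c\Bigl(\kappa_S-\kappa_N\cdot\tfrac{k}{K}\Bigr)\ =\ c\bigl(\kappa_S-k\bigr)+c\cdot\tfrac{k}{K}\bigl(K-\kappa_N\bigr)\ \ge\ 0,
\end{align*}
so both nonnegative terms vanish: $\kappa_S\equiv k$ (and $\kappa_N\equiv K$ along $df(TS)$, consistent with the Gauss equation for the totally geodesic immersion). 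Hence $(S,ds^2)$ is a complete surface of constant curvature $k<0$; its universal cover is then the hyperbolic plane of curvature $k$, which is $-\tfrac1k$ times the hyperbolic plane of curvature $-1$, and since the complete conformal metric of curvature $-1$ on $S$ (the Poincar\'e metric $ds_H^2$) is unique, we conclude $ds^2=-\tfrac1k\,ds_H^2$.

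The step I expect to be the main obstacle is this last one: Theorem~\ref{them1} as stated yields only the global equality together with total geodesy and constant rank, whereas pinning down the exact value $k$ of the Gaussian curvature requires re‑entering the proof to see which nonnegative terms in the Bochner computation are forced to vanish by $\Delta_g\log\|df\|^2\equiv 0$, as done above. Once constant curvature $k$ is in hand, the identification $ds^2=-\tfrac1k ds_H^2$ is a routine consequence of the classification of surface space forms and the uniqueness of the Poincar\'e metric in a conformal class.
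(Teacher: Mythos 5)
Your proposal is correct and follows essentially the same route as the paper: specialize Theorem \ref{them1} (Royden form with $d=1$) to get the global equality, total geodesy and constant rank, then re-enter the Bochner/Chern--Lu computation behind Royden's Proposition 4 to force the Gaussian curvature of $ds^2$ to equal $k$, and conclude by uniqueness of the hyperbolic metric. The paper simply cites Royden's proof for the curvature rigidity step, whereas you spell out the vanishing of the two nonnegative terms explicitly; the content is the same.
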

Here $ds^2_H$ is the unique hyperbolic metric on $S$. This corollary generalizes the equality case of Theorem 2 in \cite{RO2}. Theorem \ref{them1} may be generalized to the case of non-smooth functions and then to other forms of Schwarz lemmata in \cite{Ni2} . 

The second part of the paper is to derive a new Schwarz lemma on almost Hermitian manifolds, following the work of Ni \cite{Ni1} . Recently, by applying a viscosity consideration from PDE theory, Ni proves a Schwarz lemma on K\"ahler manifolds as follows (see also \cite{CCL} \cite{YC}): any holomorphic map from a complete K\"ahler manifold with holomorphic sectional curvature bounded from below by $K_1\leq 0$ into a K\"ahler manifold with holomorphic sectional curvature bounded from above by $K_2<0$ is distance decreasing up to a constant $\dfrac{K_1}{K_2}$, provided that the bisectional curvature of the domain is bounded from below. There are also several other types of Schwarz lemmata obtained in \cite{Ni1} and \cite{Ni2}. The aforementioned result looks rather symmetric as the distance decreasing constant only relies on the holomorphic sectional curvatures. Along this direction, we prove the following result on almost Hermitian manifolds.

\begin{thm}\footnote{After this work was completed, the authors noticed that Theorem \ref{T1.1} and Corollary \ref{cor1.2} were also proved by Weike Yu 	independently in \cite{Yu1}, using a different method.} \label{T1.1}
Let $(M, J, g)$ and $(N, J_N, h)$ be almost Hermitian manifolds with $M$ being complete. Suppose the holomorphic sectional curvatures of $M$ and $N$ satisfy $H_M\geq K_1$ and $H_N\leq K_2$, with $K_1\leq 0$ and $K_2<0$. Let $f: M\rightarrow N$ be a holomorphic map. Then $$f^*h \leq \dfrac{K_1}{K_2}g,$$ provided that the bisectional curvature of $M$ is bounded from below,  and the torsion and $(2,0)$ part of the curvature of $M$ are bounded. If $K_1=0$, then $f$ is a constant map.
\end{thm}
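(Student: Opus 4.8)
The plan is to follow Ni's viscosity/PDE approach adapted to the almost Hermitian setting, using the maximum principle on the quantity $u=\operatorname{tr}_g(f^*h)$ (or a suitable rank-one truncation of the eigenvalue function of $f^*h$ with respect to $g$), exactly in the spirit of Tosatti's proof of Theorem \ref{To2} but with the bisectional-curvature upper bound on $N$ replaced by a holomorphic-sectional-curvature bound. First I would record the complex-geometric formalism on almost Hermitian manifolds: the canonical (Chern-type) connection, the second Ricci curvature and the decomposition of the full curvature into its $(1,1)$ and $(2,0)$ parts, together with the torsion terms that appear when the manifold is not Kähler. Since $f$ is holomorphic, $df$ is $\mathbb{C}$-linear and $f^*h$ is a nonnegative Hermitian form; I would diagonalize $f^*h$ against $g$ at a point and let $\lambda_1\geq\cdots\geq\lambda_n\geq 0$ be the eigenvalues, so that $u=\sum_i\lambda_i$.

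The key step is a Bochner–Chern–Lu type inequality for $\log u$ (or for $\lambda_1$ in the viscosity sense). Differentiating twice and using holomorphicity, the Laplacian of $u$ produces three kinds of terms: (i) a positive gradient term $|\nabla df|^2$ that dominates the bad first-order terms coming from torsion, via the hypothesis that the torsion and the $(2,0)$ part of the curvature of $M$ are bounded and the bisectional curvature of $M$ is bounded below — this is precisely where those technical assumptions are consumed, as in \cite{T1}; (ii) a term involving the second Ricci curvature of $M$ acting on $df$, which is bounded below using only $H_M\geq K_1$ after the viscosity trick of restricting to the top eigendirection, so that the bisectional bound is \emph{not} needed for this piece; and (iii) the curvature-of-$N$ term $-\sum h(\text{Rm}_N(\cdots))$, which, because $N$ is only assumed to have a holomorphic sectional curvature bound, must be estimated by the inequality
\begin{align*}
\sum_{i,j} R^N\big(df(e_i),\overline{df(e_i)},df(e_j),\overline{df(e_j)}\big)\;\geq\; K_2\,\Big(\tfrac{1}{2}\big(\operatorname{tr}_g f^*h\big)^2+\tfrac12\sum_i\lambda_i^2\Big),
\end{align*}
the standard algebraic lemma (used by Royden and by Ni) bounding a sum of sectional-type curvatures below by holomorphic sectional curvatures; at the level of a single eigendirection this gives the clean bound $-R^N(\cdot)\geq -K_2\lambda_1^2$, which is what makes the ratio $K_1/K_2$ come out.

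Combining, on the support set where $u>0$ I obtain, in the viscosity/barrier sense, an inequality of the schematic form $\Delta \log u \geq -C + \big(-K_2\big)u - (-K_1)$ away from the bad terms, or more precisely $\Delta \log u \geq (-K_2)\,\lambda_1 + K_1$ after the eigenvalue reduction. Then I would apply the generalized maximum principle of Yau/Cheng–Yau for complete manifolds (justified by the lower Ricci bound on $M$, which follows from the curvature hypotheses) to conclude $\sup_M \lambda_1 \leq K_1/K_2$, hence $f^*h\leq \frac{K_1}{K_2}g$. In the borderline case $K_1=0$ the same inequality forces $\lambda_1\equiv 0$, so $df\equiv 0$ and $f$ is constant.

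The main obstacle I expect is (ii)–(iii) interacting with the almost-complex torsion: in the non-Kähler almost Hermitian category $\nabla df$ is no longer fully symmetric, so the Chern–Lu computation generates extra torsion and $(2,0)$-curvature terms that are genuinely new compared with the Kähler case of Ni, and one must check carefully that the good second-order term $|\nabla df|^2$ together with the boundedness hypotheses absorbs all of them uniformly — this is the technical heart, and it is exactly the place where Tosatti's estimates in \cite{T1} need to be re-run in the presence of only a holomorphic sectional curvature bound on $N$ rather than a bisectional one, using the algebraic curvature inequality above in place of the naive bisectional estimate.
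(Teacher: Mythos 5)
Your proposal takes a genuinely different route from the paper, and as written it contains gaps that are not merely technical. The paper does not run a Chern--Lu/Bochner computation at all: it follows Royden's disc method, using the Ivashkovich--Rosay theorem (Theorem \ref{T3.7}) to produce a $J$-holomorphic disc through $p$ tangent to the top eigendirection of $df$ whose second fundamental form vanishes at the point (Proposition \ref{p3.8}), so that by the Gauss-type equation (Corollary \ref{c3.5}) the Gaussian curvature of $\varphi^*g$ at $0$ equals $H_M(\xi_1)$ while that of $\varphi^*f^*h$ is at most $K_2$. The whole problem is thereby reduced to the one-variable inequality $\partial_z\partial_{\bar z}\log(v/u)(0)\geq -\frac{K_2}{4}v^2(0)+\frac{K_1}{4}u^2(0)$, and the remaining hypotheses on $M$ enter only through Tosatti's complex Hessian comparison (Theorem \ref{to}) applied to the distance function in the cutoff $(1-r^2/a^2)$.

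Two concrete problems with your plan. First, the ``standard algebraic lemma'' of Royden that converts a holomorphic sectional curvature bound on $N$ into a bound on $\sum_{i,j}R^N(df(e_i),\overline{df(e_i)},df(e_j),\overline{df(e_j)})$ is proved by polarization using the K\"ahler symmetries of $R^N$; it is not available for a general almost Hermitian target, which is exactly why Theorem \ref{Ro1} assumes $N$ K\"ahler. Your fallback---restricting to the single top eigendirection $e_1$---does avoid that lemma, but then you must also arrange that the \emph{domain} curvature term reduces to $H_M(e_1)$ rather than a second-Ricci-type contraction $\sum_i R^M(e_i,\bar e_i,e_1,\bar e_1)$, which is not controlled by $H_M\geq K_1$; this forces a one-complex-direction second-derivative test, and carrying out the relevant commutations for the Chern connection with nonvanishing torsion and $(2,0)$/$(0,2)$ curvature is precisely the step you defer as ``the technical heart.'' The disc construction of Section 4 exists to make that step unnecessary. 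Second, the Omori--Yau maximum principle requires a lower bound on the \emph{Riemannian} Ricci curvature of $g$, which does not follow from bounds on the Chern bisectional curvature, torsion and $(2,0)$ curvature (the Levi-Civita and Chern curvatures differ by first derivatives of the torsion, which are not assumed bounded). Relatedly, your account of where the hypotheses on $M$ are consumed is misplaced: they are not used to absorb bad first-order terms into $|\nabla df|^2$, but solely to invoke the Hessian comparison for $r$, the localization being done with the explicit factor $(1-r^2/a^2)$ and Calabi's trick rather than with a generalized maximum principle.
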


To prove the theorem, we apply the local holomorphic disc technique in \cite{RO1}. Namely, by choosing a suitable $J$-holomorphic disc in the maximum direction of $df$ and pulling back the metrics, we can roughly reduce the proof in higher dimension to the two-dimensional case, where the proof of Ahlfors-Schwarz lemma applies. To achieve the reduction, we compare the holomorphic sectional curvature of an almost Hermitian manifold and its almost complex submanifolds (see also \cite{Ko1}). As the almost complex structure may be non-integrable, we also need to show the existence of $J$-holomorphic discs with prescribed conditions. This is guaranteed by the work in \cite{IR}.

We remark that if $K_1>0$ and $K_2\leq 0$, the same proof of the above result shows that $f$ is also a constant map. When $M$ is compact, the torsion and the curvature tensor are automatically bounded. Then we get
\begin{cor} \label{cor1.2} Let $(M, J, g)$ and $(N, J_N, h)$ be almost Hermitian manifolds with $M$ being compact. Suppose the holomorphic sectional curvatures of $M$ and $N$ satisfy $H_M\geq K_1$ and $H_N\leq K_2$. For any holomorphic map $f: M\rightarrow N$,
\begin{itemize}
\item [(i)] if $K_1<0, K_2< 0$, then $f^*h \leq \dfrac{K_1}{K_2}g$;\\
\item [(ii)] if $K_1\geq 0, K_2<0$ or $K_1>0, K_2\leq 0$, then $f$ is a constant map.
\end{itemize}
\end{cor}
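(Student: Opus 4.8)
The plan is to derive Corollary \ref{cor1.2} directly from Theorem \ref{T1.1} by observing that compactness removes the auxiliary hypotheses. First I would record that on a compact almost Hermitian manifold $(M,J,g)$ the torsion tensor, the full curvature tensor (hence its $(2,0)$ part), and the bisectional curvature are all continuous tensor fields on a compact space, so each is automatically bounded; in particular the bisectional curvature is bounded from below. Likewise $M$ is automatically complete. Thus every structural assumption appearing in the hypothesis of Theorem \ref{T1.1} is satisfied as soon as $M$ is compact, and what remains is purely a matter of the sign assumptions on $K_1,K_2$.

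For part (i), with $K_1<0$ and $K_2<0$, I would simply note that $H_M\geq K_1$ and $H_N\leq K_2$ with $K_1\le 0$, $K_2<0$ place us exactly in the setting of Theorem \ref{T1.1}, which yields $f^*h\leq \frac{K_1}{K_2}g$. For part (ii), the case $K_1\geq 0$, $K_2<0$: if $K_1=0$ this is the last sentence of Theorem \ref{T1.1}, giving that $f$ is constant; if $K_1>0$, then since $H_M\geq K_1>0\geq 0$ we in particular have $H_M\geq 0$, so applying Theorem \ref{T1.1} with the (weaker) lower bound $0$ in place of $K_1$ gives $f$ constant. The remaining case $K_1>0$, $K_2\leq 0$ is handled by the remark immediately preceding the corollary in the excerpt, which asserts that the same proof of Theorem \ref{T1.1} shows $f$ is constant under $K_1>0$, $K_2\leq 0$; invoking that remark completes (ii).

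The only genuine subtlety — and the step I would present most carefully — is the monotonicity used in (ii): that a holomorphic sectional curvature lower bound $H_M\geq K_1$ with $K_1>0$ implies the hypothesis $H_M\geq 0$ needed to quote the $K_1=0$ conclusion of Theorem \ref{T1.1}. This is immediate since $K_1>0$ forces $H_M\geq 0$, but I would spell it out so that the reduction is transparent, and similarly confirm that the sign constraints $K_1\le 0$, $K_2<0$ required by Theorem \ref{T1.1} are met (or appropriately weakened to $0$) in each subcase. Beyond this, there is no real obstacle: the corollary is a specialization of the main theorem, and the proof is essentially a verification that the hypotheses hold, together with an appeal to the remark for the $K_1>0$, $K_2\le 0$ case.
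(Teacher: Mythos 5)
Your proposal is correct and follows essentially the same route as the paper, which simply notes that compactness makes all the auxiliary boundedness hypotheses of Theorem \ref{T1.1} automatic and then invokes that theorem (together with the remark preceding the corollary for the case $K_1>0$, $K_2\leq 0$). Your explicit reduction of the case $K_1>0$, $K_2<0$ to the $K_1=0$ conclusion via the weaker bound $H_M\geq 0$ is exactly the intended monotonicity argument.
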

Part (ii) has been obtained previously by Yang \cite{Y1} on Hermitian manifolds by a different method. Later Masood \cite{M} generalizes it to almost Hermitian manifolds. Another corollary of Theorem $\ref{T1.1}$ is the following.
\begin{cor}\label{cor1.3}
Let $(M,J,g)$ be a compact almost Hermitian manifold with negative constant holomorphic sectional curvature. Then any biholomorphism of $(M,J)$ is an isometry of $(M,g)$. In particular, the group of biholomorphisms of $(M,J)$ is compact.
\end{cor}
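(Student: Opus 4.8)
The plan is to deduce everything from Corollary \ref{cor1.2}(i) by a two-sided application of the Schwarz estimate. Let $\vp$ be an arbitrary biholomorphism of $(M,J)$, so that $\vp$ and $\vp^{-1}$ are both $J$-pseudoholomorphic self-maps of $M$. Since $(M,J,g)$ has constant holomorphic sectional curvature $c<0$, we may take $K_1=K_2=c$ in Corollary \ref{cor1.2}; the torsion and curvature boundedness hypotheses are automatic because $M$ is compact. Hence $\vp^{*}g\le \frac{c}{c}\,g=g$, and likewise $(\vp^{-1})^{*}g\le g$. Pulling the latter inequality back by $\vp$ and using $(\vp^{-1}\circ\vp)^{*}g=g$ gives $g\le \vp^{*}g$, so $\vp^{*}g=g$ and $\vp$ is an isometry of $(M,g)$.

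It then follows that the biholomorphism group $\mathrm{Aut}(M,J)$ is a subgroup of the isometry group $\mathrm{Isom}(M,g)$. Since $M$ is compact, $\mathrm{Isom}(M,g)$ is a compact Lie group by the Myers--Steenrod theorem. To conclude I would check that $\mathrm{Aut}(M,J)$ is closed in $\mathrm{Isom}(M,g)$: if isometries $\vp_k\in\mathrm{Aut}(M,J)$ converge to some $\psi\in\mathrm{Isom}(M,g)$ in the (compact-open, equivalently $C^{\infty}$) topology, then passing to the limit in the pointwise relations $d\vp_k\circ J=J\circ d\vp_k$ shows $d\psi\circ J=J\circ d\psi$, and $\psi$ is already a diffeomorphism, hence a biholomorphism. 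A closed subgroup of a compact Lie group is compact, so $\mathrm{Aut}(M,J)$ is compact.

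There is no real obstacle here: the substance is entirely carried by Corollary \ref{cor1.2}, and the corollary is essentially a formal consequence. The only points requiring a word of care are the two-sided use of the Schwarz-type inequality (which is what upgrades ``distance decreasing'' to ``isometry''), and the elementary observation that $\mathrm{Aut}(M,J)$ sits as a closed subgroup inside the compact group $\mathrm{Isom}(M,g)$.
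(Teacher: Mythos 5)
Your proposal is correct and follows essentially the same route as the paper: apply the Schwarz inequality (Theorem \ref{T1.1}/Corollary \ref{cor1.2} with $K_1=K_2=c<0$) to both $\vp$ and $\vp^{-1}$ to get $g\le \vp^*g\le g$, then note that the biholomorphism group is a closed subgroup of the compact isometry group. Your extra detail on verifying closedness is a minor elaboration of what the paper states in one clause.
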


When $(M,J)$ is a compact Hermitian manifold with negative holomorphic sectional curvature, Wu \cite{Wu} proves that the group of biholomorphism of $(M,J)$ is finite. To derive the finiteness in the above result, one still need to show that the group is discrete.

The structure of the paper is as follows. In section 2, we give the notations and background materials. In section 3, we prove Theorem \ref{them1}. In section 4, we discuss the holomorphic sectional curvature of the Chern connection and show the existence of some special $J$-holomorphic discs. In section 5, we prove Theorem \ref{T1.1} . Throughout the paper, we assume that $M$ is a connected manifold. $\|df\|^2$ is the trace function $tr_g(f^*h)$ and $\|df\|_0$ is the supreme norm of $df$.\\

\noindent \textbf{Acknowledgements.} We are very grateful to Professor Jiaping Wang for helpful suggestions and his support. We want to express our gratitude to Professors Kefeng Liu, Lei Ni and Fangyang Zheng for useful communications and discussions. We also thank Professor Jianfei Wang for helpful communications and the warm hospitality when we visited Huaqiao University in July 2021 and thank Professor Yibin Ren for some nice discussions.

 \section{Preliminaries}

In this section, we give some background materials, including almost Hermitian manifolds, the Chern connection and K\"ahler metrics, etc. We refer to \cite{T1}, \cite{TWY} and \cite{Z} for more details. 

Let $(M,J)$ be a $2m$-dimensional almost complex manifold and $g$ be a Riemannian metric on $M$. $(M, J, g)$ is called an almost Hermitian manifold if $g(JX,JY)=g(X,Y)$ for any $X,Y\in TM$. 
It is known that there is a unique connection $\nabla$ on $(M, J, g)$  (see e.g. \cite{TWY}) satisfying $$\nabla J=0,\ \ \nabla g=0$$ and $T(X,JX)=0$ for all $X\in TM$, where $$T(X,Y)=\nabla_XY-\nabla_YX-[X,Y]$$ is the torsion tensor. It is called the Chern connection or canonical connection. The last requirement is equivalent to that the (1,1) part of the torsion vanishes. The Nijenhuis tensor $N_J$ of $J$ is defined by  $$N_J(X,Y)=[X,Y]+J[JX,Y]+J[X,JY]-[JX,JY]$$ for $X,Y\in TM$. The K\"ahler form $\omega$ is defined by $\omega(X,Y)=g(JX,Y)$. An almost Hermitian manifold is a K\"ahler manifold if $N_J=0$ and $d\omega=0$. It is known that $(M, J, g)$ is K\"ahler if and only if $T=0$ \cite{Z}.

On an almost Hermitian manifold $(M, J, g)$, denote $T^{1,0}M$ and $T^{0,1}M$ the $\sqrt{-1}$ and $-\sqrt{-1}$ eigenbundles of $J$ in $TM\otimes_\mathbb R \mathbb C$. Then $TM\otimes_\mathbb R\mathbb C=T^{1,0}M\oplus T^{0,1}M$. For any $\xi\in T^{1,0}M$, we can write $\xi=\dfrac{1}{\sqrt{2}}(X-\sqrt{-1}JX)$ for some $X\in TM$. This gives an isomorphism between $ T^{1,0}M$ and $TM$. Given a smooth function $\phi$ on $M$, the complex Hessian of $\phi$ is defined to be $(\na^2\phi)^{(1,1)}$, i.e., the $(1,1)$ part of the Hessian $\na^2 \phi$. For instance, for $\xi,\eta\in T^{1,0}M$,
$$(\na^2\phi)^{(1,1)}(\xi, \bar{\eta})=\na^2\phi(\xi, \bar{\eta})=\xi(\bar{\eta}\phi)-(\na_{\xi}\bar{\eta})\phi.$$
Let $\{e_1, e_2, ..., e_m\}$ be a local unitary (1,0) frame on $M$ and $\{\theta^1,\theta^2, ...,  \theta^m \}$ be the dual frame so that $g=\sum_{i=1}^m\theta^i\otimes \bar{\theta}^i+\bar{\theta}^i\otimes\theta^i.$ 
Let $$\phi_{i\bar{j}}=\na^2\phi(e_i, \overline{e_j})=e_i\overline{e_j}\phi-(\nabla_{e_i}\overline{e_j})\phi.$$ As the $(1,1)$ part of the torsion of $\nabla$ vanishes, $\na_{e_i}\ov{e_j}-\na_{\ov{e_j}}e_i=[e_i, \ov{e_j}].$  Then $\phi_{i\bar{j}}=\phi_{\bar{j}i}$. Using Einstein summation convention, locally write
$$(\na^2\phi)^{(1,1)}=\phi_{i\bar{j}}(\theta^i\otimes \bar{\theta}^j+\bar{\theta}^j\otimes\theta^i).$$
The complex Laplacian of $\phi$ is defined to be \begin{align} \label{lap} \Delta \phi=tr_g(\na^2\phi)^{(1,1)}=2\sum_{i=1}^m\phi_{i\bar{i}}.\end{align} It is known that $\Delta$ is an elliptic operator which differs from the Laplacian of the Levi-Civita connection by some first order differential operators (Lemma 3.2 in \cite{T1}). Use $\langle\ , \ \rangle$ for the inner product $g$ and denote 
\begin{align*}
&R(X,Y)Z=\nabla_X\nabla_YZ-\nabla_Y\nabla_XZ-\nabla_{[X,Y]}Z,\\
&R(X, Y, Z, W)=\langle R(X,Y)Z, W\rangle,
\end{align*}
to be the curvature tensors of $\nabla$.  
\begin{defn}
Given two nonzero $(1,0)$ vectors $\xi=\frac{1}{\sqrt{2}}(X-\sqrt{-1}JX), \eta=\frac{1}{\sqrt{2}}(Y-\sqrt{-1}JY)$, 
 the holomorphic bisectional curvature of $\nabla$ in directions $\xi, \eta$ is defined to be $$B(\xi,\eta)=\dfrac{R(\xi,\bar{\xi},\eta,\bar{\eta})}{|\xi|^2|\eta|^2}=\dfrac{R(X,JX,JY,Y)}{|X|^2|Y|^2},$$
where $|\xi|^2=|X|^2=\langle X, X\rangle$ and $|\eta|^2=|Y|^2=\langle Y,Y\rangle$. The holomorphic sectional curvature in the direction $\xi$ or $X$ is defined to be
 $$H(\xi)=B(\xi,\xi)=R(X,JX,JX,X)/|X|^4.$$
 We also use $H(X)$ for $H(\xi)$ in the context. The first and second Ricci curvatures of $(J,g)$ are defined by $$Ric^{(1)}(\xi,\bar{\eta})=\sum_{i=1}^m R(\xi,\bar{\eta},e_i,\ov{e_i}), \ \ Ric^{(2)}(\xi,\bar{\eta})=\sum_{i=1}^m R(e_i,\ov{e_i}, \xi,\bar{\eta}).$$ The Chern scalar curvature is defined to be $R=\sum_{i,j=1}^m R(e_i,\ov{e_i},e_j,\ov{e_j})$.
\end{defn}
We remark that if $(M,J,g)$ is K\"ahler, then $Ric^{(1)}=Ric^{(2)}$ by the K\"ahler symmetry. It is called the Ricci curvature in short. We have the following notions of boundedness on an almost Hermitian manifold (cf. \cite{T1}).
\begin{defn}
\begin{itemize}
\item[(i)] The first (resp. second) Ricci curvature is said to be bounded from below if there is a constant $C$ such that $Ric^{(1)}(\xi,\bar{\xi})\geq C|\xi|^2$ (resp. $Ric^{(2)}(\xi,\bar{\xi})\geq C|\xi|^2$) for any $\xi\in T^{1,0}M$. \vspace{.1cm}
\item[(ii)] The holomorphic bisectional curvature of $M$ is said to be bounded from below if there is a constant $C_1$ such that $B(\xi,\eta)\geq C_1$ for all $\xi, \eta\in T^{1,0}M$. \vspace{.1cm}
\item[(iii)] The torsion tensor $T$ is said to be bounded if there is a constant $C_2>0$ such that $|T(\xi,\eta)|\leq C_2|\xi||\eta|$ for any $\xi,\eta\in T^{1,0}M$. \vspace{.1cm}
\item[(iv)] The $(2,0)$ part of the curvature $R$ is said to be bounded if there is a constant $C_3>0$ such that $|R(\xi,\eta,\xi, \bar{\eta})|\leq C_3|\xi|^2|\eta|^2$ for any $\xi, \eta\in T^{1,0}M$.
\end{itemize}
\end{defn}
Note that the $(2,0)$ part of the curvature tensor vanishes when $M$ is Hermitian, i.e. $N_J=0$. If $M$ is compact, then it clearly satisfies all the boundedness conditions. For a fixed point $o\in M$, denote $r(q)=d(o,q)$ the distance from $o$ to $q\in M$. It is smooth away from $o$ and its cut locus. The following complex Hessian comparison theorem on almost Hermitian manifolds due to Tosatti \cite{T1} will be used in the proof of Theorem \ref{T1.1} (also see \cite{Yu}). 
\begin{thm}[Tosatti] \label{to}
Let $(M,J,g)$ be an almost Hermitian manifold with bisectional curvature bounded from below by $C_1$, torsion bounded by $C_2$ and $(2,0)$ part of the curvature bounded by $C_3$. Then at any point where $r$ is smooth, $$r_{i\bar{j}}\leq (\dfrac{1}{r}+C)g_{i\bar{j}},$$
where $C$ depends on $C_1,C_2,C_3$ and the dimension of $M$. \end{thm}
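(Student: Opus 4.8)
The plan is to reduce the inequality to the classical Riemannian Hessian comparison theorem along a minimizing geodesic, carefully accounting for the difference between the Chern connection $\nabla$ and the Levi-Civita connection $D$ of $g$. Fix a point $q$ at which $r$ is smooth and let $\gamma:[0,r(q)]\to M$ be the unit-speed $g$-minimizing geodesic from $o$ to $q$; since $r$ is smooth at $q$, $\gamma$ has no interior conjugate points and $\nabla r|_q=\dot\gamma(r(q))$. Write $\nabla=D+A$, where the difference tensor $A$ is linear in the torsion $T$ (the contorsion), so $|A|\le C\,C_2$. From $(\nabla^2 r-D^2 r)(X,Y)=-dr\big(A(X,Y)\big)$ and $|dr|=1$ we see it suffices to bound the $(1,1)$-part of the \emph{Levi-Civita} Hessian $D^2 r$ up to an additive constant. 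Moreover, because the $(1,1)$-part of $T$ vanishes, the mixed terms drop out and for a unit $(1,0)$ vector $\xi=\tfrac1{\sqrt2}(X-\sqrt{-1}JX)$ one has $(\nabla^2 r)(\xi,\bar\xi)=\tfrac12\big(\nabla^2 r(X,X)+\nabla^2 r(JX,JX)\big)$. Hence everything comes down to an upper bound for $D^2 r(X,X)+D^2 r(JX,JX)$ with $\{X,JX\}$ orthonormal at $q$.

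For this I would run the second-variation/index-form comparison, but applied to the $J$-related pair $\{X,JX\}$ simultaneously rather than to a single direction. Splitting $X$ and $JX$ into their $\dot\gamma$-radial and tangential parts (the radial parts contribute nothing to $D^2 r$), let $E,F$ be the parallel unit vector fields along $\gamma$ extending the normalized tangential parts, and compare the indices of the relevant Jacobi fields with those of $\phi E$ and $\phi F$ for one common profile $\phi$ vanishing at $0$. This yields a bound of the shape
\[
D^2 r(X,X)+D^2 r(JX,JX)\le\int_0^{r(q)}\!\big(2\phi'^2-\phi^2\,[K^{LC}(E,\dot\gamma)+K^{LC}(F,\dot\gamma)]\big)\,dt,
\]
so the whole estimate rests on a \emph{lower bound for the sum} $K^{LC}(E,\dot\gamma)+K^{LC}(F,\dot\gamma)$, with $F$ close to $JE$. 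Such a sum of two sectional curvatures is precisely what a holomorphic bisectional curvature bound controls: converting the curvature of the Chern connection to that of $D$ (the discrepancy, and the non-parallelism of $J$ under $D$, are governed by $\nabla J$, the Nijenhuis tensor, and the $(2,0)$-part of $R$, hence by $C_2$ and $C_3$) and then applying the first Bianchi identity, one gets $K^{LC}(E,\dot\gamma)+K^{LC}(JE,\dot\gamma)\ge C_1-C(C_2,C_3)$. Choosing $\phi=\sinh(at)/\sinh(a\,r(q))$ with $2a^2=C(C_2,C_3)-C_1$ makes the model integral equal $2a\coth(a\,r(q))\le 2\big(\tfrac1{r(q)}+a\big)$, and reinstating the $A$-correction gives $r_{i\bar j}\le(\tfrac1r+C)g_{i\bar j}$.

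The main obstacle I expect is the curvature bookkeeping in the non-integrable setting: one must verify that (i) the difference tensor $A$ and the curvature discrepancy $R^{\nabla}-R^{D}$ are genuinely controlled by the stated bounds on the torsion ($C_2$) and on the $(2,0)$-part of the curvature ($C_3$) --- note $DT$ is \emph{not} assumed bounded, which is exactly why the $(2,0)$-hypothesis is needed --- and (ii) carry out the Bianchi manipulation that turns a holomorphic bisectional curvature lower bound into the required lower bound for $K^{LC}(E,\dot\gamma)+K^{LC}(JE,\dot\gamma)$ without assuming $DJ=0$. A secondary technical nuisance is handling the radial components of $X$ and $JX$ when $\dot\gamma$ is not contained in the $J$-line of $X$, so that $F$ genuinely differs from $JE$; I would deal with this by working with the Jacobi fields along all of $\gamma$ (equivalently, running the index-form comparison in the complexified/Hermitian form adapted to $\nabla$, where the curvature term $R^{\nabla}(e_i,\bar e_i,\dot\gamma^{1,0},\overline{\dot\gamma^{1,0}})$ is literally a holomorphic bisectional curvature $\ge C_1$), rather than projecting only at the endpoint. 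Once these algebraic points are settled, the comparison estimate itself is standard.
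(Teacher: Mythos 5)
The paper does not prove this statement: it is quoted as a theorem of Tosatti, and the text only points to where the argument appears in \cite{T1} (expressions (4.30)--(4.32) and the discussion following them), plus a remark on the normalization of the complex Hessian. Measured against that source, your outline reproduces the actual proof strategy: reduce to the Levi-Civita index form along the minimizing geodesic, absorb the difference tensor $\nabla-D$ (controlled by the torsion bound $C_2$) into the additive constant, use $T(X,JX)=0$ to write $\nabla^2 r(\xi,\bar\xi)=\tfrac12\big(\nabla^2r(X,X)+\nabla^2r(JX,JX)\big)$, bound the resulting sum of two Riemannian sectional curvatures from below by the holomorphic bisectional curvature of the Chern connection up to errors governed by $C_2$ and $C_3$ (the $(2,0)$-part entering via the first Bianchi identity with torsion, which trades $\nabla T$ for curvature --- your observation that this is exactly why $DT$ need not be assumed bounded is correct), and finish with the $\sinh$ profile giving $2a\coth(ar)\le 2(1/r+a)$.

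One step of your plan, as literally written, would fail and needs the fix you only gesture at. If $E$ and $F$ are taken $D$-parallel along $\gamma$, the pair $(E(t),F(t))$ is ($J$-)related only at the endpoint: $D$-parallel transport does not commute with $J$, so $F(t)$ drifts away from $JE(t)$ at interior points at a rate controlled by the torsion but proportional to arc length, and the pointwise lower bound $K^{LC}(E,\dot\gamma)+K^{LC}(F,\dot\gamma)\ge C_1-C(C_2,C_3)$ genuinely requires the plane spanned by $E,F$ to be $J$-invariant, since the bisectional curvature hypothesis only controls sums of the form $R(X,\eta,\eta,X)+R(JX,\eta,\eta,JX)$. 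The correct choice, and the one made in \cite{T1}, is to take the test frame parallel for the Chern connection $\nabla$, so that it stays unitary and $J$-invariant along all of $\gamma$; the price is that $D_{\dot\gamma}E=-A(\dot\gamma,E)$ no longer vanishes, producing an extra term $\phi^2|D_{\dot\gamma}E|^2\le C\,C_2^2\,\phi^2$ in the index form (the cross term vanishes since $|E|$ is constant), which is again absorbed into the additive constant. With that substitution your outline is a faithful reconstruction of the cited proof; what remains is exactly the curvature bookkeeping you identify, which is the substance of Tosatti's computation.
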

Here $A_{i\bar{j}}\leq B_{i\bar{j}}$ means that $(B_{i\bar{j}}-A_{i\bar{j}})$ is positive semi-definite. The proof of the above theorem is contained in \cite{T1} during the proof of the Laplacian comparison theorem. 
Explicitly, it follows from the expressions (4.30), (4.31), (4.32) and the argument after (4.32) there. As $r_{i\bar{j}}=\dfrac{1}{2}\sqrt{-1}d(Jdr)(e_i,\overline{e_j})$,  the factor $2$ does not appear in the complex Hessian comparison.

 In Proposition 2 in \cite{RO1}, an existence result of supporting functions is stated. As in non-K\"ahler setting, it relies on the complex Hessian comparison, it seems that the condition of Riemmanian sectional curvature bounded from below there should be replaced by conditions on bisectional curvature and torsion or other related conditions.

Let $(N,J_N,h)$ be a $2n$-dimensional almost Hermitian manifold.
\begin{defn}
A smooth map $f:M\lra N$ is called a $(J,J_N)$ holomorphic map (or holomorphic map in short) if it satisfies $$df\circ J=J_N\circ df.$$
\end{defn}
Let $T^{1,0}M$ and $T^{1,0}N$ be the bundle of $(1,0)$ tangent vectors of $M$ and $N$. By definition, $df(T^{1,0}M)\subset T^{1,0}N$. For any $p\in M$, choose a local unitary frame $\{\eta_1, \eta_2, ..., \eta_n\}$ near $q=f(p)\in N$ such that $df$ is given by $df(e_i)=f_i^\alpha\eta_\alpha$. The norm of $df$ is defined to be $$\|df\|^2=\sum_{i=1}^{m}\sum_{\al=1}^{n}|f_i^{\al} |^2.$$
It is easy to show that $\|df\|^2$ is independent of the chosen frames and is globally defined. It is also equal to the trace function $tr_g(f^*h)$.

\section{The equality case of Schwarz lemmata}
In this section, we study the equality case of the Schwarz lemmata of Yau, Royden and Tosatti. First, the following strong maximum principle of Hopf is known (cf. Theorem 3.5 in \cite{GT} for example).
\begin{prop}\label{Hopf} Let $L$ be a uniformly elliptic linear differential operator of the form 
$$Lu=a^{ij}D_{ij}u+b^i(x)D_i u+c(x)u, \ \  a^{ij}=a^{ji}$$ in a domain $\Omega\subset \mathbb{R}^n$. Let $\lambda(x)$ be the minimum eigenvalues of $(a^{ij})$.  Suppose that $c\leq 0$ and $\dfrac{c}{\lambda}$ is bounded. If $u\in C^2(\Omega)$ satisfies $Lu\geq 0$, then $u$ cannot achieve a non-negative maximum in the interior of $\Omega$ unless it is constant.
\end{prop}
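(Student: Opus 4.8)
The plan is to derive Proposition~\ref{Hopf} from the Hopf boundary point lemma together with a connectedness argument, following the classical route (cf. \cite{GT}).

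\emph{Step 1: the boundary point lemma.} First I would establish: if $B=B_\rho(y)$ is a ball with $\overline{B}\subset\Omega$, if $Lu\geq 0$ in $B$, and if there is $z\in\partial B$ with $u(x)<u(z)$ for all $x\in B$ and $u(z)\geq 0$, then the outward normal derivative satisfies $\partial u/\partial\nu(z)>0$. To see this, work on the annulus $A=B_\rho(y)\setminus\overline{B_{\rho/2}(y)}$ and introduce the comparison function
$$v(x)=e^{-\alpha|x-y|^2}-e^{-\alpha\rho^2},\qquad \alpha>0.$$
A direct computation gives, on $A$,
$$Lv=e^{-\alpha|x-y|^2}\Big(4\alpha^2\,a^{ij}(x_i-y_i)(x_j-y_j)-2\alpha\sum_i a^{ii}-2\alpha\,b^i(x_i-y_i)\Big)+cv,$$
and since $v\geq 0$ on $A$ while $c\leq 0$, using uniform ellipticity (so that $a^{ij}(x_i-y_i)(x_j-y_j)\geq\lambda|x-y|^2\geq\lambda\rho^2/4$) together with the boundedness of $b^i$ and $c$ relative to $\lambda$, one finds $Lv>0$ on $A$ once $\alpha$ is chosen large. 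Now for small $\ep>0$ consider $w=u+\ep v-u(z)$. Since $u<u(z)$ on the compact inner sphere $\partial B_{\rho/2}(y)$ we may pick $\ep$ so that $w<0$ there, while $v=0$ and $u\leq u(z)$ give $w\leq 0$ on the outer sphere; thus $w\leq 0$ on $\partial A$. Moreover $Lw=Lu+\ep Lv-c\,u(z)\geq 0$ on $A$ — here precisely the hypotheses $c\leq 0$ and $u(z)\geq 0$ are used, so that $-c\,u(z)\geq 0$. By the weak maximum principle (valid since $c\leq 0$) we get $w\leq 0$ on $A$, and $w(z)=0$, so $z$ is a boundary maximum of $w$ and $\partial w/\partial\nu(z)\geq 0$. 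Since $\partial v/\partial\nu(z)=-2\alpha\rho\,e^{-\alpha\rho^2}<0$, this forces $\partial u/\partial\nu(z)\geq-\ep\,\partial v/\partial\nu(z)>0$.

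\emph{Step 2: conclusion by connectedness.} Suppose $u\in C^2(\Omega)$ with $Lu\geq 0$ attains a non-negative maximum $Q=\sup_\Omega u\geq 0$ at an interior point, but $u\not\equiv Q$. Then $\Omega^-=\{x\in\Omega:u(x)<Q\}$ is nonempty and open, and since $\{u=Q\}$ contains an interior point of $\Omega$ and $\Omega$ is connected, one can choose $y\in\Omega^-$ whose distance to $\{u=Q\}$ is smaller than its distance to $\partial\Omega$. Let $\rho=d(y,\{u=Q\})$ and $B=B_\rho(y)$; then $\overline{B}\subset\Omega$, $u<Q$ on $B$, and there is $z\in\partial B$ with $u(z)=Q\geq 0$. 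Applying Step 1 on $B$ yields $Du(z)\neq 0$. But $z$ is an interior point of $\Omega$ at which $u$ attains its maximum, so $Du(z)=0$ — a contradiction. Hence $u\equiv Q$.

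\emph{Main obstacle.} The crux is the construction and estimate of the barrier $v$: one must verify $Lv>0$ throughout the annulus, which needs not only uniform ellipticity but also that the lower-order coefficients are controlled by $\lambda$ so that the $\alpha^2$-term dominates; this is where the hypothesis that $c/\lambda$ be bounded (together with the analogous control on $b^i$ implicit in the uniform ellipticity/boundedness setup of \cite{GT}) enters. A secondary prerequisite is the weak maximum principle for $c\leq 0$, itself obtained by the perturbation $u\mapsto u+\ep e^{\gamma x_1}$ on the set $\{u>0\}$, where $cu\leq 0$ allows one to discard the zeroth-order term and reduce to the case $c=0$.
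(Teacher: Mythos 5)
Your proof is correct and is exactly the classical Hopf argument (barrier function on an annulus, boundary point lemma, then the connectedness/interior-critical-point contradiction) that the paper itself does not reproduce but simply cites as Theorem 3.5 of \cite{GT}. The hypotheses $c\leq 0$, $u(z)\geq 0$ and the boundedness of $c/\lambda$ enter precisely where you say they do, so there is nothing to add.
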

We then apply the differential inequalities in \cite{RO1}\cite{T1} to prove Theorem \ref{them1}.
\begin{proof} [Proof of Theorem 1.4] Assume that we are in the setting of Theorem \ref{Ro1} and let $v=\|df\|^2$. Suppose at some $p\in M$, $v(p)=\frac{2d}{d+1} \frac{k}{K}>0$ with $k<0, K<0$ (the case of $k=0$ or $d=0$ is trivial). By Proposition 4 in \cite{RO1}, at any $q\in M$,
\begin{align}
\Delta \log v\geq k-\dfrac{rk(q)+1}{2\ rk(q)} Kv\geq k-\dfrac{d+1}{2d} Kv,
\end{align} where $rk(q)$ is the complex rank of $df$ at $q\in M$ and $\Delta$ is the complex Laplacian defined in $(\ref{lap})$. By (\ref{Ro2}), the following holds on $M$,
\begin{align*}
0\leq \dfrac{K}{2k}\dfrac{d+1}{d}v \leq 1.
\end{align*}
Also, in a small neighbourhood $U$ of $p$, $v>0$.  Let $\dfrac{K}{2k}\dfrac{d+1}{d}=C_1$.  As $0<C_1v\leq 1$ in $U$, the elementary inequality
$1-C_1v\leq-\log (C_1v)$ holds. Denote $u=\log (C_1v)\leq 0$. Then we have 
\begin{align*}
\Delta \log v&\geq k-\dfrac{d+1}{2d} Kv\\
&\geq -k\log (C_1v), 
\end{align*} 
on $U$, which gives that 
\begin{align*}
\Delta u+k u\geq 0.
\end{align*}
The complex Laplacian $\Delta$ naturally satisfies the conditions in Proposition \ref{Hopf} in $U$. As $u\leq 0$ with $u(p)=0$ and  $ k<0$, by Proposition \ref{Hopf}, $u$ must be constant zero and $v=\frac{2d}{d+1} \frac{k}{K}$ in $U$.  It follows that the set $\{q\in M|v(q)=\frac{2d}{d+1} \frac{k}{K}\}$ is open and closed. By the connectedness of $M$, $v$ is constant on $M$ and $rk(q)=d$ for any $q\in M$.

For the inequalities (\ref{Yau2}),(\ref{to3}), as Theorem \ref{To2}  extends Theorem \ref{Yau}, we only need to discuss the case of (\ref{to3}). Assume that we are in the setting of Theorem \ref{To2} and $K<0$. Let $v=\|df\|^2$. By (5.41) and Theorem 4.1 in \cite{T1},  $\Delta v\geq 2(kv-Kv^2),$ and $v\leq \dfrac{k}{K}$.  Then 
\begin{align*}
\Delta v&\geq 2Kv\left(\dfrac{k}{K}-v\right)\\
&\geq 2k\left(\dfrac{k}{K}-v\right).
\end{align*}
Let $u=v-\dfrac{k}{K}$, then $\Delta u\geq -2ku$. Note that $u\leq 0$. The same argument as the above shows that if the equality of (\ref{to3}) holds at some $p\in M$, then $u=0$ in a neighborhood of $p$. It follows that $v=\dfrac{k}{K}$ in a neighborhood of $p$. By the connectedness of $M$, $v$ is constant on the whole manifold.

Once we have the global equality of $v$, by the formulas in the proof of Proposition 4 in \cite{RO1} (see also inequality (A.3) in \cite{Ni1}) and (5.41) in \cite{T1}, then $\nabla df=0$. So $f$ is totally geodesic and the rank of $df$ is constant in all cases \cite{V}.  \end{proof}

\begin{remk} \label{remk3.1}
From the above proof and Proposition \ref{Hopf}, we can obtain a special strong maximum principle as follows.
Let $L$ be a uniformly elliptic linear differential operator of the form 
$$Lu=a^{ij}D_{ij}u+b^i(x)D_i u, \ \  a^{ij}=a^{ji}$$ in a domain $\Omega\subset \mathbb{R}^n$. Suppose $u\leq C$ and  $Lu\geq f(u)$ for some function $f$ with  $f(C)\geq 0$ and  $f'(x)\leq M, \forall\ x\leq C$ for some $M>0$. Then $u$ can not equal $C$ in the interior of $\Omega$ unless it is constant.
\end{remk}
In \cite{Ya1}, Yau also obtained the volume form version of the Schwarz lemma for holomorphic maps under weaker curvature conditions. It was extended by Tosatti \cite{T1} to almost Hermitian manifolds as follows.
\begin{thm}[Tosatti]\label{T5}
Let $(M, J, g)$ be a $2m$-dimensional complete almost Hermitian manifold with Chern scalar curvature bounded from below by $mK_1<0$. Let $(N, J_N, h)$ be another $2m$-dimensional almost Hermitian manifold with the first Ricci curvature bounded from above by $K_2<0$. If the second Ricci curvature of $(M,g)$ is bounded from below and with torsion and $(2,0)$ part of the curvature bounded, then for any non-degenerate holomorphic map $f:M\rightarrow N$,  
\begin{align} \label{to4}
f^*dV_h\leq (\dfrac{K_1}{K_2})^m dV_g.
\end{align}
\end{thm}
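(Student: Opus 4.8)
The plan is to carry out Yau's volume-form argument \cite{Ya1} within the almost Hermitian framework developed by Tosatti \cite{T1}. Let $u\geq 0$ be the smooth function on $M$ determined by $f^*dV_h=u\,dV_g$. Since $\dim_{\mathbb C}M=\dim_{\mathbb C}N=m$ and $f$ is non-degenerate, $u>0$ on $M$, and the assertion $(\ref{to4})$ is precisely the pointwise bound $u\leq(K_1/K_2)^m$; note that $K_1<0$ and $K_2<0$, so $K_1/K_2>0$. It is convenient to set $w=u^{1/m}>0$, so that the claim becomes $w\leq K_1/K_2$.

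The heart of the matter is a differential inequality for the complex (Chern) Laplacian $\Delta$ of $(M,J,g)$ from $(\ref{lap})$ applied to $\log u$. In a local unitary coframe, $u$ equals, up to the squared norm of the $(1,0)$-Jacobian of $f$, the ratio $\det(f^*h)/\det(g)$; the Jacobian factor is ``$J$-pluriharmonic'' modulo torsion and Nijenhuis terms, and carrying out the Bochner-type computation of \cite{T1} --- which is where the bounds on the torsion and on the $(2,0)$ part of the curvature of $M$ are used, to absorb the non-integrable corrections --- one arrives at
\begin{align*}
\Delta\log u\;\geq\;S_g-\tr_g f^{*}\Ric^{(1)}_N,
\end{align*}
where $S_g$ denotes the Chern scalar curvature of $M$ and $\Ric^{(1)}_N$ the first Ricci curvature of $N$. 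The curvature hypotheses give $S_g\geq mK_1$, while $\Ric^{(1)}_N\leq K_2h$ yields, upon contracting the pullback with $g$, the bound $\tr_g f^{*}\Ric^{(1)}_N\leq K_2\|df\|^2$. Since $K_2<0$ and, by the arithmetic--geometric mean inequality applied to the eigenvalues of $f^*h$ relative to $g$, $\|df\|^2=\tr_g f^*h\geq m\,u^{1/m}=mw$, these combine to give $\Delta\log u\geq mK_1-K_2mw$. Finally, using $\Delta\log u=m\bigl(\Delta w/w-|\nabla w|^2/w^2\bigr)\leq m\,\Delta w/w$ and multiplying through by $w>0$,
\begin{align*}
\Delta w\;\geq\;K_1w-K_2w^2\;=\;|K_2|\,w\Bigl(w-\dfrac{K_1}{K_2}\Bigr).
\end{align*}

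It remains to extract $w\leq K_1/K_2$ from this inequality by a maximum principle. If $M$ is compact, $w$ attains its maximum at some $p$; there $\Delta w(p)\leq 0$ and $w(p)>0$, forcing $w(p)-K_1/K_2\leq 0$, hence $w\leq K_1/K_2$ on $M$. For complete non-compact $M$ I would use the distance function $r=d(o,\cdot)$ together with the Chern Laplacian comparison of \cite{T1}, valid under exactly the present hypotheses (second Ricci of $M$ bounded below, torsion and $(2,0)$-curvature bounded): applying Yau's cutoff argument to $(R^2-r^2)^2w$ on the geodesic ball $B_R(o)$ and letting $R\to\infty$ --- equivalently, invoking the Omori--Yau maximum principle for $\Delta$ to produce points $x_j$ with $w(x_j)\to\sup_M w$, $|\nabla w(x_j)|\to 0$ and $\limsup_j\Delta w(x_j)\leq 0$ --- the displayed quadratic inequality forces $\sup_M w<\infty$ and then $\sup_M w\leq K_1/K_2$. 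In either case $u=w^m\leq(K_1/K_2)^m$. The one genuinely hard step is the first displayed inequality: establishing the Bochner-type identity for $\Delta\log u$ in the non-integrable setting and checking that every torsion and Nijenhuis contribution is controlled by the stated boundedness assumptions without spoiling the clean curvature inequality --- this is the technical content that distinguishes the almost Hermitian case from Yau's original Kähler computation. Once that identity is available, the algebraic reductions and the maximum principle on a complete manifold (via the Laplacian comparison) are standard.
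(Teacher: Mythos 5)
This theorem is not proved in the paper at all: it is quoted from Tosatti \cite{T1}, and the only trace of its proof that appears here is the differential inequality $\Delta v\geq -2mK_2v^{1+\frac{1}{m}}+2mK_1v$ for $v=\det f^*h/\det g$, cited from page 1081 of \cite{T1} in the proof of Proposition \ref{prop3.1}. Your outline is the standard Yau--Tosatti volume-form argument and is consistent with that quoted inequality (your $\Delta w\geq K_1w-K_2w^2$ for $w=v^{1/m}$ differs only by the factor $2$ coming from the normalization of $\Delta$ in (\ref{lap}) and by your use of the AM--GM step $\tr_g f^*h\geq m v^{1/m}$ at the level of $\log v$ rather than $v$); the compact and complete-noncompact maximum-principle endgames are both correct.

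Two caveats. First, the one genuinely hard step --- the Chern--Lu/Bochner inequality $\Delta\log v\geq S_g-\tr_g f^*\Ric^{(1)}_N$ in the non-integrable setting --- is asserted with a pointer to \cite{T1} rather than derived, so your write-up, like the paper's, ultimately rests on Tosatti's computation. Second, you locate the use of the torsion and $(2,0)$-curvature bounds in that Bochner identity (``to absorb the non-integrable corrections''); in fact those hypotheses, together with the lower bound on the second Ricci curvature, are exactly what Tosatti's Laplacian comparison for $\Delta r$ requires, i.e.\ they are consumed by the cutoff/maximum-principle step on complete noncompact $M$ --- which you do also invoke correctly later --- while the curvature inputs to the Bochner inequality are only the Chern scalar curvature of $M$ and the first Ricci curvature of $N$. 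This misattribution does not affect the validity of the argument.
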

Here $dV_g$ and $dV_h$ are the volume forms on $(M, g)$ and  $(N, h)$ respectively and the map $f$ is non-degenerate if $f^*dV_h$ is nowhere vanishing. Applying Remark \ref{remk3.1}, we prove the following result. We would like to thank Professor Kefeng Liu for pointing it out to us.
\begin{prop}\label{prop3.1}
Assume that the equality holds at one point in $(\ref{to4})$, then it holds on the whole manifold $M$.
\end{prop}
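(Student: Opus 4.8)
The plan is to run the maximum-principle argument of Theorem \ref{them1} with the trace function replaced by the Jacobian. Since $f$ is non-degenerate, $f^*dV_h$ is nowhere vanishing, so there is a smooth \emph{positive} function $v$ on $M$ with $f^*dV_h=v\,dV_g$; the positivity of $v$ is exactly what makes $\log v$ globally smooth, and it is the only place non-degeneracy is used. The first step is to extract from the proof of Theorem \ref{T5} (which adapts Yau's computation in \cite{Ya1}) the pointwise differential inequality
\[
\Delta \log v \;\ge\; mK_1 - mK_2\, v^{1/m},
\]
valid on all of $M$, up to a harmless positive constant multiplying the right-hand side. Here $\Delta$ is the complex Laplacian $(\ref{lap})$; on any relatively compact coordinate chart it is uniformly elliptic with bounded lower-order coefficients, so it satisfies the hypotheses of Proposition \ref{Hopf} and Remark \ref{remk3.1}. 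The inequality combines the Chern scalar curvature lower bound $mK_1$ on $M$, the first Ricci upper bound $K_2$ on $N$, and the arithmetic–geometric mean inequality applied to the eigenvalues of $f^*h$ with respect to $g$ (whose product is $v$).

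Next I would normalize. By Theorem \ref{T5} we have $v\le (K_1/K_2)^m$ on $M$, and by hypothesis $v(p)=(K_1/K_2)^m$ at some $p\in M$. Set $\psi=\log v-m\log(K_1/K_2)$, so that $\psi\le 0$ on $M$ and $\psi(p)=0$. Since $v^{1/m}=(K_1/K_2)\,e^{\psi/m}$, the inequality above becomes
\[
\Delta\psi \;\ge\; mK_1\bigl(1-e^{\psi/m}\bigr)=:F(\psi).
\]
The function $F$ satisfies $F(0)=0$ and $F'(x)=-K_1 e^{x/m}\le -K_1$ for every $x\le 0$, because $K_1<0$ and $e^{x/m}\le 1$ there. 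Hence Remark \ref{remk3.1}, applied on a small ball around $p$ with $C=0$ and $M_0:=-K_1>0$, forces $\psi$ to be constant, hence identically $0$, in a neighbourhood of $p$; equivalently, $v\equiv (K_1/K_2)^m$ near $p$. (One may also argue directly via Proposition \ref{Hopf}: the elementary inequality $\log t\le t-1$ with $t=e^{\psi/m}$ gives $1-e^{\psi/m}\le -\psi/m$, hence $\Delta\psi+K_1\psi\ge 0$ with $K_1\le 0$, and $\psi$ attains its non-negative maximum $0$ at the interior point $p$.)

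Finally, the set $\{q\in M: v(q)=(K_1/K_2)^m\}$ is nonempty, closed by continuity, and open by the previous step; by connectedness of $M$ it is all of $M$, i.e. equality holds in $(\ref{to4})$ everywhere. I expect the only genuinely non-routine point to be the first step: isolating the precise form of the $\log v$ inequality inside the proof of Theorem \ref{T5}, and checking that the torsion and $(2,0)$-part-of-curvature contributions on $M$ (controlled by the boundedness hypotheses) enter only as lower-order terms compatible with Proposition \ref{Hopf}. Once that inequality is in hand, the maximum-principle argument is verbatim the one used for Theorem \ref{them1}.
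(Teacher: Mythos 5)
Your argument is correct and follows essentially the same route as the paper: both rest on the Chern--Lu volume-form inequality from Tosatti's computation together with the strong maximum principle packaged in Remark \ref{remk3.1} (or Proposition \ref{Hopf}), plus the open-closed connectedness argument. The paper is slightly more direct: it quotes the inequality $\Delta v\geq -2mK_2v^{1+\frac{1}{m}}+2mK_1v$ for $v=\det f^*h/\det g$ straight from page 1081 of \cite{T1} and applies Remark \ref{remk3.1} to $v$ itself with $f(v)=-2mK_2v^{1+\frac{1}{m}}+2mK_1v$ (noting $f'(v)\leq -2K_1$ on the relevant range), which avoids your passage to $\log v$ and the subsequent normalization.
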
 
\begin{proof}
Let $v=\dfrac{\det f^*h}{\det g}$. Then $v\leq (\dfrac{K_1}{K_2})^m$ by Theorem \ref{T5}. By the Chern-Lu formula for the volume form (see Page 1081 in \cite{T1}), we have
$$\Delta v\geq -2mK_2v^{1+\frac{1}{m}}+2mK_1v.$$ Let $f(v)=-2mK_2v^{1+\frac{1}{m}}+2mK_1v$. Then $f'(v)\leq -2K_1$ as $v\leq (\dfrac{K_1}{K_2})^m$ . The result then follows from Remark \ref{remk3.1}.
\end{proof}

\begin{proof}[Proof of Corollary \ref{cor1.1}]  By Theorem \ref{them1}, $tr_gf^*h\equiv \frac{k}{K}$ and $f$ is totally geodesic. Then the image $f(S)$ is a totally geodesic submanifold of $N$. As $f$ is non-constant and $m=1$, then $f$ is an immersion and $f^*h=\frac{k}{K}ds^2$. By the proof of Proposition 4 in \cite{RO1}, the equality forces that the Gaussian curvature of $ds^2$ equals $k$. Therefore, $ds^2=-\dfrac{1}{k}ds^2_H$ by the uniqueness of the hyperbolic metric on $S$.
\end{proof}

\begin{remk} \label{remk3.2} Recently in \cite{KB}, Broder extends Schwarz type inequalities to more general settings on Hermitian manifolds. He informed us that our proof may treat the equality case of the inequalities in his paper. Indeed, the differential inequalities derived there satisfy the conditions of Remark \ref{remk3.1} and the analogous equality result follows (c.f. Remark 1.10 in \cite{KB}). We would like to thank him for the communications.
\end{remk}

\section{Holomorphic sectional curvature and $J$-holomorphic disc}
In this section, we compare the holomorphic sectional curvatures of an almost Hermitian manifold and its almost complex submanifolds. Then we discuss the existence of some special $J$-holomorphic discs through a point in any direction on an almost Hermitian manifold, based on the work of \cite{IR}.
\begin{defn} Let $(S, i)$ be an immersed submanifold of $(M, J)$ with $i:S\lra M$ being the immersion. For any $p\in S$, identify $T_pS$ with $i_*(T_pS)$ in $T_{i(p)}M$. If $J(T_pS)\subset T_pS$, then we say $(S, i)$ is an almost complex submanifold of $(M, J)$.
\end{defn}
The pull back of $(J,g)$ through $i$ induces an almost Hermitian structure on $S$ which we still denote by $(J,g) $. As an immersion is locally embedding, to focus on local properties, from now on we assume that $S$ is an embedded almost complex submanifold and identify $S$ with $i(S)$. 

Denote $\nabla $ and $\hat{\nabla}$ the Chern connections on $M$ and $S$ respectively. Let $NS$ be the normal bundle of $S$ and $\pi^\top$ and $\pi^\bot$ be the tangential and normal projections:
\begin{align*}\pi^\top: \  TM|_S\rightarrow TS,\ \ \ 
\pi^\bot: \ TM|_S\rightarrow NS.
\end{align*} Then $\nabla $ and $\hat{\nabla}$ are related as follows.
\begin{prop} \label{p3.2}Let $X, Y$ be in $\Gamma(TS)$ which are extended to vector fields on $M$. Then we have
$$\hat{\nabla}_XY=(\nabla _XY)^\top.$$
\end{prop}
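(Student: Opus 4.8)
The plan is to verify the formula $\hat{\nabla}_XY=(\nabla_XY)^\top$ by checking that the right-hand side, viewed as a connection on $TS$, satisfies the three defining properties that characterize the Chern connection of the induced almost Hermitian structure $(J,g)$ on $S$: metric compatibility, compatibility with $J$, and vanishing of the $(1,1)$-part of the torsion. By the uniqueness statement recalled in Section 2, any connection on $S$ with these three properties must equal $\hat{\nabla}$, so it suffices to establish them for $\nabla^\top$ defined by $\nabla^\top_XY := (\nabla_XY)^\top$ for $X,Y\in\Gamma(TS)$ (extended arbitrarily to a neighbourhood in $M$; one first checks this is well defined, i.e. independent of the extension and tensorial in $X$, which is standard since $\nabla_XY$ at $p$ depends only on $X_p$ and on $Y$ along a curve tangent to $X_p$, hence along a curve in $S$).

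First I would check $\nabla^\top$ is a genuine connection: additivity and the Leibniz rule in $Y$ are immediate because $\nabla$ satisfies them and $\pi^\top$ is linear. For metric compatibility, take $X,Y,Z\in\Gamma(TS)$; since $g$ on $S$ is the restriction of $g$ on $M$ and $\nabla g=0$,
$$X\langle Y,Z\rangle=\langle\nabla_XY,Z\rangle+\langle Y,\nabla_XZ\rangle=\langle(\nabla_XY)^\top,Z\rangle+\langle Y,(\nabla_XZ)^\top\rangle,$$
using that $Z,Y\in TS$ are orthogonal to the normal components; hence $\nabla^\top g=0$. For compatibility with $J$, the key point is that $S$ is an \emph{almost complex} submanifold, so $J$ preserves both $TS$ and (by $g$-orthogonality, since $g(JX,JY)=g(X,Y)$) the normal bundle $NS$; thus $\pi^\top$ commutes with $J$. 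Then for $X,Y\in\Gamma(TS)$,
$$\nabla^\top_X(JY)=(\nabla_X(JY))^\top=(J\nabla_XY)^\top=J(\nabla_XY)^\top=J\nabla^\top_XY,$$
so $\nabla^\top J=0$.

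The remaining and most delicate step is the torsion condition. For $X,Y\in\Gamma(TS)$ the bracket $[X,Y]$ is again tangent to $S$ (as $S$ is an embedded submanifold), so the torsion of $\nabla^\top$ is
$$T^\top(X,Y)=\nabla^\top_XY-\nabla^\top_YX-[X,Y]=(\nabla_XY-\nabla_YX-[X,Y])^\top=T(X,Y)^\top,$$
i.e. the tangential part of the ambient Chern torsion. I must show its $(1,1)$-part vanishes, equivalently $T^\top(X,JX)=0$ for all $X\in\Gamma(TS)$; but that is immediate from $T(X,JX)=0$ on $M$ and the fact that $JX\in TS$ when $X\in TS$. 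Hence $\nabla^\top$ has vanishing $(1,1)$-torsion, and by uniqueness $\nabla^\top=\hat{\nabla}$, which is the claim. The main obstacle is really bookkeeping: being careful that the "extension to $M$" in the statement is legitimate (well-definedness and tensoriality of $\nabla^\top$) and that each use of "restrict to $S$" is justified by $S$ being embedded and almost complex — there is no deep analytic input, the content is entirely the uniqueness characterization of the Chern connection applied on $S$.
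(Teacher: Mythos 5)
Your proposal is correct and follows essentially the same route as the paper: define $\nabla^\top_XY=(\nabla_XY)^\top$ and verify metric compatibility, $\nabla^\top J=0$ (using that $J$ preserves both $TS$ and $NS$), and the vanishing of the $(1,1)$-part of the torsion, then conclude by the uniqueness of the Chern connection. The only cosmetic difference is that you check the torsion condition in the real form $T^\top(X,JX)=0$ while the paper checks it on a $(1,0)$ and a $(0,1)$ field; these are equivalent, as noted in Section 2.
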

\begin{proof} Define $\nabla^\top_XY:=(\nabla _XY)^\top$. It is clear that $\nabla^\top$ is a connection on $S$. By the uniqueness of Chern connection,  to show that $\nabla^\top=\hat{\nabla}$,  it suffices to show that $\nabla^\top g=0, \ \nabla^\top J=0$ and its torsion has vanishing $(1,1)$ part. Let $X, Y, Z\in \Gamma(TS)$ which are extended to vector fields on $M$. At any $p\in S$, we have
\begin{align*}
X\langle Y, Z\rangle=&\langle\nabla _X Y, Z  \rangle+ \langle Y,  \nabla _ XZ\rangle\\
=&\langle(\nabla _X Y)^\top, Z  \rangle+ \langle Y, (\nabla _ XZ)^\top\rangle\\
=&\langle\nabla^\top_X Y, Z  \rangle+ \langle Y,\nabla^\top_XZ\rangle
\end{align*}
Thus $\nabla^\top g=0$. Also, as $JX\in T_pS$, for any $W\in \Gamma(NS)$, 
$\langle X, JW\rangle=-\langle JX, W\rangle=0.$ So $JW\in \Gamma(NM)$. Then
\begin{align*}
\nabla_X^\top(JY)&=(\nabla _X (JY))^\top=(J\nabla _X Y)^\top\\
&=(J\nabla^\top_XY+J(\nabla _XY)^\bot)^\top\\
&=J\nabla^\top_XY.
\end{align*}
So $\nabla^{\top}J=0$.
Finally, let  $X, Y$ be (1,0) and (0,1) vector fields of $S$ respectively.  Then
\begin{align*}
\nabla^\top_ XY-\nabla_Y^\top X&=(\nabla _XY-\nabla _YX)^\top\\
&=[X, Y]^\top=[X, Y]. 
\end{align*}
So the torsion of  $\nabla^\top$ has vanishing (1,1) part. Therefore, $\nabla^\top=\hat{\nabla}$.
\end{proof} 
For any $X,Y\in TS$, the second fundamental form of $S$ is given by $$\textit{II}\ (X, Y):= (\nabla _X Y)^\bot. $$ By Proposition \ref{p3.2} and the metric compatibility of $\nabla $, we have 
\begin{prop}  \label{p3.3}Let $X, Y\in \Gamma (TS)$ and $W\in \Gamma (NS)$ which are extended to vector fields on $M$. Then 
\begin{align*}\nabla _XY&=\hat{\nabla}_XY+\mathit{II} (X, Y)\\
\langle \nabla _XW, Y\rangle &=-\langle W, \mathit{II} (X, Y)\rangle. \end{align*}
\end{prop}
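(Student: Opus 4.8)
The plan is to obtain both identities formally from Proposition~\ref{p3.2} together with the metric compatibility $\nabla g=0$ of the Chern connection on $M$, in exact parallel with the classical Riemannian Gauss--Weingarten formalism; no genuinely new input beyond Proposition~\ref{p3.2} is needed. First I would prove the first identity. Given $X,Y\in\Gamma(TS)$ extended to vector fields on $M$, split $\nabla_XY\in TM|_S$ into tangential and normal parts, $\nabla_XY=(\nabla_XY)^\top+(\nabla_XY)^\bot$. By Proposition~\ref{p3.2} the tangential part is $(\nabla_XY)^\top=\hat\nabla_XY$, and by the definition of the second fundamental form the normal part is $(\nabla_XY)^\bot=\mathit{II}(X,Y)$; adding them gives $\nabla_XY=\hat\nabla_XY+\mathit{II}(X,Y)$. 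I would note in passing that $\mathit{II}(X,Y)$ is tensorial in both arguments (in $Y$ because the correction term $(Xf)Y$ in $\nabla_X(fY)$ is tangent to $S$ and hence has zero normal part), so the right-hand side, like the left-hand side, depends only on the values of $X,Y$ along $S$ and is independent of the chosen extensions.

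For the second identity, let $W\in\Gamma(NS)$ and $Y\in\Gamma(TS)$ be extended to vector fields on $M$. Since $W$ is normal and $Y$ tangent along $S$, the function $\langle W,Y\rangle$ vanishes identically on $S$, so $X\langle W,Y\rangle=0$ at each point of $S$ for $X\in TS$. Applying $\nabla g=0$ yields $\langle\nabla_XW,Y\rangle+\langle W,\nabla_XY\rangle=0$. Substituting the first identity for $\nabla_XY$ and using that $\hat\nabla_XY\in TS$ is orthogonal to $W\in NS$, I get $\langle\nabla_XW,Y\rangle=-\langle W,\hat\nabla_XY+\mathit{II}(X,Y)\rangle=-\langle W,\mathit{II}(X,Y)\rangle$, which is the asserted Weingarten-type relation.

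There is essentially no serious obstacle here: once Proposition~\ref{p3.2} is available the content is purely formal, and only two mild points deserve a word of care. The first is that nothing uses torsion-freeness of $\nabla$ — only the decomposition $TM|_S=TS\oplus NS$ and metric compatibility — so the non-Kähler setting causes no trouble (the price being that $\mathit{II}$ need not be symmetric, which is irrelevant to these identities). The second is the invariance under extensions of the vector fields, handled by the tensoriality remark above, together with the fact, already used in the proof of Proposition~\ref{p3.2}, that $J$ preserves the splitting $TS\oplus NS$ so that the whole computation stays within the $(1,0)\oplus(0,1)$-compatible framework. This pair of formulas is worth recording because it is the precise tool needed to run a Gauss-equation computation comparing the holomorphic sectional curvatures of $M$ and of its almost complex submanifold $S$, which is the next step of the section.
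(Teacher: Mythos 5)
Your proof is correct and is exactly the argument the paper intends: the first identity is just Proposition \ref{p3.2} combined with the definition of $\mathit{II}$ as the normal part, and the second follows from $\nabla g=0$ applied to the identically vanishing function $\langle W,Y\rangle$ on $S$. The paper states Proposition \ref{p3.3} without a written proof precisely because it is this formal consequence, so no further comparison is needed.
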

Denote $R$ and $\hat{R}$ the curvature for $\nabla $ and $\hat{\nabla}$. With Proposition \ref{p3.3}, we get the Gauss type equation by direct calculation.
\begin{prop}For any $X, Y, Z, W\in T_pS$, \label{p3.4}
$$R(X, Y, Z, W)=\hat{R}(X, Y, Z, W)- \langle \mathit{II}(X, W), \mathit{II} (Y,Z)\rangle+ \langle\mathit{II}(X,Z), \textit{II}(Y,W)\rangle.$$
\end{prop}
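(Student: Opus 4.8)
The plan is to prove the Gauss-type equation by the classical computation, with the Levi-Civita connection replaced by the Chern connection and the orthogonal decomposition taken from Proposition \ref{p3.3}. Fix $p \in S$ and extend $X, Y, Z, W$ to vector fields on $M$ that are tangent to $S$ along $S$ (possible since $S$ is embedded). Then $[X,Y]$ is again tangent to $S$ along $S$, so $\hat{\nabla}_{[X,Y]}Z$ makes sense there; all the identities below are understood to hold along $S$.

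First I would expand $\nabla_X\nabla_Y Z$. By Proposition \ref{p3.3}, $\nabla_Y Z = \hat{\nabla}_Y Z + \mathit{II}(Y,Z)$, and applying $\nabla_X$ and decomposing once more,
\begin{align*}
\nabla_X\nabla_Y Z = \hat{\nabla}_X\hat{\nabla}_Y Z + \mathit{II}\big(X, \hat{\nabla}_Y Z\big) + \nabla_X\big(\mathit{II}(Y,Z)\big).
\end{align*}
Pairing with the tangential field $W$, the middle term drops out since $\mathit{II}(X,\hat{\nabla}_Y Z) \in \Gamma(NS)$, and for the last term the second identity of Proposition \ref{p3.3}, applied with the normal field $\mathit{II}(Y,Z)$, gives $\langle \nabla_X \mathit{II}(Y,Z), W\rangle = -\langle \mathit{II}(Y,Z), \mathit{II}(X,W)\rangle$. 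Hence
\begin{align*}
\langle \nabla_X\nabla_Y Z, W\rangle = \langle \hat{\nabla}_X\hat{\nabla}_Y Z, W\rangle - \langle \mathit{II}(X,W), \mathit{II}(Y,Z)\rangle,
\end{align*}
and symmetrically after interchanging $X$ and $Y$. For the bracket term, $\langle \nabla_{[X,Y]}Z, W\rangle = \langle \hat{\nabla}_{[X,Y]}Z, W\rangle + \langle \mathit{II}([X,Y],Z), W\rangle = \langle \hat{\nabla}_{[X,Y]}Z, W\rangle$ because $\mathit{II}([X,Y],Z)$ is normal. Subtracting the three identities according to the definition of $R(X,Y)Z$ yields precisely the asserted formula.

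The computation itself is routine; the two points that need attention are that the Chern connection carries torsion, so $\mathit{II}$ need not be symmetric (one has $\mathit{II}(X,Y) - \mathit{II}(Y,X) = T(X,Y)^\bot$), and that every term such as $\hat{\nabla}_{[X,Y]}Z$ must actually be defined. Both are disposed of by the single observation that the Lie bracket of two vector fields tangent to $S$ is tangent to $S$; and since the derivation above never uses symmetry of $\mathit{II}$ and the target identity is not symmetrized, the presence of torsion creates no obstruction. Thus I do not expect a genuine difficulty here — the main care is bookkeeping of the tangential/normal splitting.
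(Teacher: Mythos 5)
Your proof is correct and is exactly the ``direct calculation'' the paper alludes to: the paper gives no details beyond citing Proposition \ref{p3.3}, and your expansion of $\nabla_X\nabla_YZ$ via the tangential/normal splitting together with the Weingarten-type identity is the intended argument. Your remark that the computation never uses symmetry of $\mathit{II}$ (which indeed fails for the Chern connection) is the right point to flag.
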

We are then able to compare the holomorphic sectional curvature of $\nabla$ and $\hat{\nabla}$. 
\begin{cor}\label{c3.5}
$$H(X)=\hat{H}(X)+2|\mathit{II}(X, X)|^2/|X|^4,$$
for $X\in TS$. Hence, $\hat{H}(X)\leq H(X)$ with equality holding if and only if $\mathit{II} (X,X)=0$. 
\end{cor}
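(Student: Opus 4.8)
The plan is to read off the identity directly from the Gauss-type equation of Proposition \ref{p3.4}, applied with the four arguments $(X,JX,JX,X)$, after first recording how the second fundamental form $\mathit{II}$ interacts with $J$. The only genuinely almost-Hermitian ingredient will be a mild symmetry of $\mathit{II}$ coming from the vanishing of the $(1,1)$ part of the torsion.

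First I would establish two algebraic facts. Since $NS$ is $J$-invariant (as observed in the proof of Proposition \ref{p3.2}, because $\langle X,JW\rangle=-\langle JX,W\rangle=0$ for $X\in TS$, $W\in NS$) and both connections satisfy $\nabla J=0$, $\hat\nabla J=0$, taking normal parts in $\nabla_X(JY)=J\nabla_XY$ gives
$$\mathit{II}(X,JY)=J\,\mathit{II}(X,Y).$$
Next, since the $(1,1)$ part of the torsion of the Chern connection on $M$ vanishes, $T(X,JX)=0$, so
$$\mathit{II}(X,JX)-\mathit{II}(JX,X)=\bigl(\nabla_X(JX)-\nabla_{JX}X\bigr)^\bot=\bigl(T(X,JX)+[X,JX]\bigr)^\bot=0,$$
because $[X,JX]$ is tangent to $S$. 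Hence $\mathit{II}(JX,X)=\mathit{II}(X,JX)=J\,\mathit{II}(X,X)$, and combining the two facts, $\mathit{II}(JX,JX)=J\,\mathit{II}(JX,X)=J^2\mathit{II}(X,X)=-\mathit{II}(X,X)$.

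Then I would substitute into Proposition \ref{p3.4} with $(X,Y,Z,W)=(X,JX,JX,X)$:
$$R(X,JX,JX,X)=\hat R(X,JX,JX,X)-\langle \mathit{II}(X,X),\mathit{II}(JX,JX)\rangle+\langle \mathit{II}(X,JX),\mathit{II}(JX,X)\rangle.$$
Using $\mathit{II}(JX,JX)=-\mathit{II}(X,X)$ and $\mathit{II}(X,JX)=\mathit{II}(JX,X)=J\,\mathit{II}(X,X)$, together with the fact that $J$ is a $g$-isometry, the last two terms equal $|\mathit{II}(X,X)|^2+|\mathit{II}(X,X)|^2=2|\mathit{II}(X,X)|^2$. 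Dividing by $|X|^4$ gives $H(X)=\hat H(X)+2|\mathit{II}(X,X)|^2/|X|^4$, and since the extra term is nonnegative this yields $\hat H(X)\le H(X)$, with equality precisely when $\mathit{II}(X,X)=0$. The only step needing care beyond the Kähler case is the symmetry $\mathit{II}(JX,X)=\mathit{II}(X,JX)$, which is exactly where the hypothesis $T(X,JX)=0$ on the Chern connection enters; everything else is a routine substitution.
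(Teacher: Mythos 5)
Your proof is correct and follows essentially the same route as the paper's: both establish $\mathit{II}(X,JY)=J\,\mathit{II}(X,Y)$ from $\nabla J=0$ and $\mathit{II}(JX,X)=\mathit{II}(X,JX)$ from $T(X,JX)=0$, then substitute into the Gauss equation of Proposition \ref{p3.4} evaluated on $(X,JX,JX,X)$. You are in fact slightly more explicit than the paper in spelling out the intermediate identity $\mathit{II}(JX,JX)=-\mathit{II}(X,X)$ and the use of $J$ being a $g$-isometry, which the paper leaves implicit.
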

The above inequality was obtained previously in \cite{Ko1} by using local frames and structure equations.
\begin{proof} By $\nabla J=0$, we get $\nabla _X (JY)=J(\nabla _X Y)$. So $\mathit{II} (X, JY)=J\mathit{II} (X, Y)$.
As $T(X, JX)=0$ gives $$\na_{JX}X-\na_{X}(JX)=[JX, X],$$
and $JX\in TS$, then
 \begin{align*}
 \textit{II}(JX, X)=(\na_{JX}X)^\bot=(\na_X(JX)+[JX, X])^\bot=\textit{II} (X,JX).
 \end{align*}
 By Proposition \ref{p3.4}, we have 
 \begin{align*}H(X)&=R(X,JX,JX,X)/|X|^4\\
&= \hat{H}(X)+ \dfrac{1}{|X|^4}(\langle \mathit{II}(X, X), \mathit{II} (X,X)\rangle+ \langle\mathit{II}(X,JX), \textit{II}(JX,X)\rangle)\\
 &=\hat{H}(X)+ 2|\mathit{II}(X, X)|^2/|X|^4\end{align*}
 \end{proof}
 Next, we apply Corollary \ref{c3.5} to study the curvature of the pull back metric on a $J$-holomorphic disc on $M$. Denote $\mathbb D=\{z=x+\sqrt{-1}y\in \mathbb C| |z|<1\}$ the unit disc associated with the standard complex structure $J_{st}$.
\begin{defn}
 A $J$-holomorphic disc on $M$ is a smooth map $\varphi:\mathbb D \lra M$ such that $J\circ d\varphi=d\varphi\circ J_{st}$.\end{defn}
By the work of Nijenhuis-Woolf, given any point $p\in M$ and $(1,0)$ vector $\xi$ in $T^{1,0}_pM$ with $|\xi|$ small enough, there exists a $J$-holomorphic disc on $M$ such that $\varphi(0)=p$ and $\varphi_*(\frac{\pa}{\pa z})(p)=\xi$. Equivalently, for a sufficiently small real vector $v\in T_pM$, there exists a $J$-holomorphic disc with $\varphi(0)=p$ and $\dfrac{\partial \varphi}{\partial x}(0)=\varphi_*(\frac{\partial}{\partial x} )(p)=v$. The following more general existence result is proved in \cite{IR} (Proposition 1.1) using the implicit function theorem.
\begin{thm} [Ivashkovich-Rosay] \label{T3.7}
Let $k\in \mathbb N$, $k\geq 1$. Let $J$ be a smooth almost complex structure near $0$ in $\mathbb R^{2n}$. For any $p\in \mathbb R^{2n}$ close enough to 0, and every $\{v_1,\cdots,v_k\} \in (\mathbb R^{2n})^k$ small enough, there exist a smooth $J$-holomorphic disc $\varphi:\mathbb D\lra \mathbb R^{2n}$ such that $\varphi(0)=p$ and $\dfrac{\partial ^l\varphi}{\partial x^l}(0)=v_l$,  for any $1\leq l\leq k$.
\end{thm}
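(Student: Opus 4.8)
\emph{Proof proposal.} The plan is to solve the nonlinear elliptic system expressing $J$-holomorphicity by the implicit function theorem in H\"older spaces on $\mathbb D$, using the prescribed jet to cut down the infinite-dimensional kernel of the linearized operator. Since $J(0)^2=-\mathrm{Id}$, a linear change of coordinates on $\mathbb R^{2n}$ conjugates $J(0)$ to $J_{st}$; it carries $J$-holomorphic discs, the base point $p$, and the prescribed data $(v_1,\dots,v_k)$ linearly to the corresponding objects for the transformed structure (with small data staying small), so we may assume $J(0)=J_{st}$ and identify $(\mathbb R^{2n},J(0))$ with $\mathbb C^n$. In these coordinates a disc $\varphi$ is represented by $u\colon\mathbb D\to\mathbb C^n$, and a standard computation (going back to Nijenhuis--Woolf, in the form used in pseudoholomorphic curve theory) rewrites $d\varphi\circ J_{st}=J(\varphi)\circ d\varphi$ as an elliptic quasilinear first-order system of the schematic shape
\[
\frac{\partial u}{\partial\bar\zeta}=Q\!\left(u,\frac{\partial u}{\partial\zeta}\right),
\]
where $Q$ is smooth, $Q(0,0)=0$, and the differential of $Q$ in the $\partial u/\partial\zeta$-slot vanishes at $(0,0)$ (this last point is where $J(0)=J_{st}$ is used). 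Hence every constant map solves it, and the linearization of $u\mapsto \partial_{\bar\zeta}u-Q(u,\partial_\zeta u)$ at $u\equiv 0$ is precisely the Cauchy--Riemann operator $\partial_{\bar\zeta}$ on $\mathbb C^n$-valued functions.

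Next, fix $\alpha\in(0,1)$, let $E=C^{k,\alpha}(\overline{\mathbb D},\mathbb C^n)$, and define
\[
\mathcal G(u)=\Bigl(\,\partial_{\bar\zeta}u-Q(u,\partial_\zeta u),\ u(0),\ \tfrac{\partial u}{\partial x}(0),\ \dots,\ \tfrac{\partial^k u}{\partial x^k}(0)\,\Bigr)\in C^{k-1,\alpha}(\overline{\mathbb D},\mathbb C^n)\times(\mathbb C^n)^{k+1}.
\]
Since $J$ is smooth, the Nemytskii operator $u\mapsto Q(u,\partial_\zeta u)$ is a smooth map $E\to C^{k-1,\alpha}$ (composition with a smooth function acts smoothly on $C^{k,\alpha}$, and multiplication $C^{k,\alpha}\times C^{k-1,\alpha}\to C^{k-1,\alpha}$ is bounded bilinear --- this is where $k\ge1$ is needed), and jet-evaluation at $0$ is bounded linear, so $\mathcal G$ is smooth, with $\mathcal G(0)=0$ and derivative $L(\psi)=(\partial_{\bar\zeta}\psi,\ \psi(0),\partial_x\psi(0),\dots,\partial_x^k\psi(0))$. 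The crucial linear fact is that $L$ is surjective with a bounded right inverse $R$, even though $\bar\partial$ alone is far from injective: given $(g,a_0,\dots,a_k)$, the Cauchy transform $Tg(\zeta)=\tfrac1\pi\int_{\mathbb D}\tfrac{g(\tau)}{\tau-\zeta}\,dA(\tau)$ solves $\partial_{\bar\zeta}(Tg)=g$ and maps $C^{k-1,\alpha}(\overline{\mathbb D})$ boundedly into $C^{k,\alpha}(\overline{\mathbb D})$, and then one adds the unique holomorphic polynomial $P$ of degree $\le k$ whose $x$-jet at $0$ equals $(a_0,\dots,a_k)$ minus that of $Tg$; this corrects the jet without disturbing the equation, and is possible because $\partial_x^m(\zeta^l)\big|_0=l!\,\delta_{lm}$ for $0\le l,m\le k$, so the $x$-jet map is a diagonal isomorphism on polynomials of degree $\le k$. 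Consequently $\ker L$ --- the holomorphic maps in $E$ vanishing to order $k+1$ at $0$ --- is closed and complemented, with complement $E':=\mathrm{Range}(R)$ on which $L$ restricts to a Banach-space isomorphism.

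Applying the inverse function theorem to $\mathcal G|_{E'}$ at $0$, whose derivative $L|_{E'}$ is an isomorphism, yields: for every $(p,v_1,\dots,v_k)$ small enough there is $u\in E'$, close to $0$ in $E$, with $\partial_{\bar\zeta}u=Q(u,\partial_\zeta u)$ --- so $u$ is $J$-holomorphic, its image remaining in the domain of $J$ since $\|u\|_{C^0}$ is small --- and $u(0)=p$, $\partial_x^l u(0)=v_l$ for $1\le l\le k$. As $J$ is smooth, the standard elliptic bootstrap for this quasilinear equation (interior Schauder estimates applied iteratively) promotes $u$ to $C^\infty$, and translating back through the linear normalization produces the desired smooth $J$-holomorphic disc $\varphi$.

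The main obstacle is exactly that the natural linearization $\bar\partial$ has an infinite-dimensional kernel (all holomorphic discs), so the implicit function theorem does not apply to the bare $J$-holomorphicity operator; the device that rescues it is to couple the equation to the finite-dimensional jet constraint --- which is precisely the data one wants to prescribe --- and the only genuinely (though classically) analytic point is then to verify that the coupled operator $L$ is onto with a bounded right inverse, via the mapping properties of the Cauchy transform on H\"older spaces. A secondary technicality, forcing the use of $C^{k,\alpha}$ with $k\ge1$ rather than $C^k$, is the differentiability of the Nemytskii operators built from $J$ and the availability of Schauder estimates for the final regularity bootstrap.
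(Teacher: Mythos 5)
The paper does not prove this statement itself; it quotes it as Proposition~1.1 of Ivashkovich--Rosay \cite{IR}, noting only that it is established there ``using the implicit function theorem.'' Your argument --- normalizing $J(0)=J_{st}$, rewriting $J$-holomorphicity as $\partial_{\bar\zeta}u=Q(u,\partial_\zeta u)$, coupling it with the jet-evaluation map, and inverting the linearization via the Cauchy transform plus a polynomial jet correction --- is a correct reconstruction of exactly that implicit-function-theorem approach, so it matches the (cited) proof in both method and substance.
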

Then we can prove
\begin{prop} \label{p3.8} Let $(M,J,g)$ be an almost Hermitian manifold. For any $p\in M$ and $v\in T_pM$ with $|v|$ small enough, there exists a $J$-holomorphic disc $\varphi:\mathbb D\lra M$ such that $\varphi(0)=p, \dfrac{\partial \varphi}{\partial x}(0)=v$ and the Gaussian curvature of $\varphi^*g$ at $0$ identifies with the holomorphic sectional curvature $H(v)$.
\end{prop}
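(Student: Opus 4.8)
The plan is to realize the number $H(v)$ as the Gaussian curvature of a suitably chosen $J$-holomorphic disc through $p$, by arranging that this disc is \emph{geodesic to second order at $p$}; then its second fundamental form at $p$ vanishes and the comparison in Corollary \ref{c3.5} collapses to an equality. First I would fix real local coordinates $(x^1,\dots,x^{2n})$ on $M$ centered at $p$, and let $\Gamma^a_{bc}$ denote the Christoffel symbols of the Chern connection $\nabla$ in this chart. Writing $v=\sum_a v^a\,\partial_{x^a}$, put
\[
v_2:=-\sum_{a}\Big(\sum_{b,c}\Gamma^a_{bc}(p)\,v^b v^c\Big)\partial_{x^a}\in T_pM ,
\]
so that $|v_2|=O(|v|^2)$ and hence $v,v_2$ are both small once $|v|$ is small. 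By Theorem \ref{T3.7} (with $k=2$) there is a $J$-holomorphic disc $\varphi:\mathbb D\to M$ with $\varphi(0)=p$, $\frac{\partial\varphi}{\partial x}(0)=v$ and $\frac{\partial^2\varphi}{\partial x^2}(0)=v_2$. Since $\varphi$ is $J$-holomorphic, $\frac{\partial\varphi}{\partial y}(0)=\varphi_*(J_{st}\partial_x)(0)=Jv$, and $v,Jv$ are $\mathbb R$-linearly independent (as $g(v,Jv)=0$ and $Jv\neq 0$); thus $d\varphi_0$ is injective, $\varphi$ is an immersion on a neighbourhood $U$ of $0$, and after shrinking $U$ the set $S:=\varphi(U)$ is an embedded almost complex submanifold of $M$ with $T_pS=\mathrm{span}_{\mathbb R}\{v,Jv\}$, carrying the induced almost Hermitian structure $(J,g)$.

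Next I would compute the second fundamental form $\mathit{II}$ of $S$ at $p$ in the direction $v$. Using $\varphi_*\partial_x$ as a local extension of $v\in T_pS$, the coordinate expression for the covariant derivative of a vector field along $\varphi$ gives, at $z=0$,
\begin{align*}
(\nabla_v v)\big|_p
&=\sum_a\Big(\tfrac{\partial^2\varphi^a}{\partial x^2}(0)+\sum_{b,c}\Gamma^a_{bc}(p)\,\tfrac{\partial\varphi^b}{\partial x}(0)\,\tfrac{\partial\varphi^c}{\partial x}(0)\Big)\partial_{x^a}\\
&=\sum_a\Big(v_2^a+\sum_{b,c}\Gamma^a_{bc}(p)\,v^b v^c\Big)\partial_{x^a}=0
\end{align*}
by the choice of $v_2$. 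In particular $\mathit{II}(v,v)=\big((\nabla_v v)|_p\big)^\bot=0$, so Corollary \ref{c3.5}, applied on $S$, yields
\[
H(v)=\hat H(v)+2\,|\mathit{II}(v,v)|^2/|v|^4=\hat H(v),
\]
where $\hat H$ is the holomorphic sectional curvature of the Chern connection $\hat\nabla$ of $(S,J,g)$.

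Finally, since $S$ has real dimension $2$ its Nijenhuis tensor vanishes and its K\"ahler form is closed for dimensional reasons, so $(S,J,g)$ is K\"ahler and $\hat\nabla$ is the Levi-Civita connection of $g|_S$. Using the orthogonal basis $\{v,Jv\}$ of $T_pS$, with $g(v,Jv)=0$ and $|Jv|=|v|$, the sectional curvature of the $2$-plane $T_pS$ equals $\hat R(v,Jv,Jv,v)/|v|^4=\hat H(v)$, and on a surface this sectional curvature is the Gaussian curvature. As $\varphi|_U:(U,\varphi^*g)\to (S,g|_S)$ is an isometry, the Gaussian curvature of $\varphi^*g$ at $0$ equals that of $g|_S$ at $p$, which equals $\hat H(v)=H(v)$. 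This proves the proposition.

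The main point of the argument is the choice of the second-order jet $v_2$ forcing $\varphi$ to be geodesic to second order at $p$, together with the existence of a $J$-holomorphic disc realizing this prescribed $2$-jet; because $J$ need not be integrable, such existence is not automatic and is precisely what Theorem \ref{T3.7} supplies. The only bookkeeping point is that the smallness hypotheses of Theorem \ref{T3.7} are met automatically: $v_2$ is quadratic in $v$ with coefficients bounded near $p$, so $|v|$ small forces $|v_2|$ small.
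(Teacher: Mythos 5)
Your proposal is correct and follows essentially the same route as the paper: prescribe the $2$-jet of the disc via Theorem \ref{T3.7} so that $\nabla_X X$ vanishes at $p$, conclude $\mathit{II}(v,v)=0$, and invoke Corollary \ref{c3.5}. The only difference is that you spell out the final identification of the Gaussian curvature of $\varphi^*g$ with $\hat H(v)$ (via the automatic K\"ahlerness of a real two-dimensional almost Hermitian manifold, so that $\hat\nabla$ is Levi-Civita), a step the paper states without justification; this is a worthwhile clarification but not a different argument.
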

\begin{proof}
Let $t^1, t^2, ..., t^{2m}$ be local coordinates of $M$ at $p$.  Let $\Gamma_{ij}^k$ be the real Christoffel symbols of the Chern connection $\nabla$. Write $v=a^i \dfrac{\pa}{\pa t^i}$. For $|v|$ small enough, by Theorem \ref{T3.7},  there exists a $J$-holomorphic disc $\varphi:\mathbb D\lra M$ such that $\varphi(0)=p$, $\dfrac{\partial \varphi}{\partial x}(0)=v$
and 
$$\dfrac{\pa^2\vp^k}{\pa x^2}(0)+a^ia^j\Gamma_{ij}^k(p)=0,$$
where $\varphi=(\varphi^1,\cdots, \varphi^{2n}).$ If $v$ is nonzero, then $\varphi$ is an embedding in a neighborhood $U$ of $0$. Denote $X=\varphi_*(\dfrac{\pa}{\pa x})$ the vector field near $p$, which extends to a local vector field on $M$. Then 
\begin{align*} 
\nabla_XX(p)=\nabla_{\vp_*(\frac{\pa}{\pa x})}(\dfrac{\pa\vp^i}{\pa x}\dfrac{\pa}{\pa t^i})(p)
=(\dfrac{\pa^2\vp^k}{\pa x^2}(0)+a^ia^j\Gamma_{ij}^k(p))\dfrac{\pa}{\pa t^k}=0
\end{align*}
Therefore, $\mathit{II} (X,X)(p)=(\nabla_XX)^\bot(p)=0$. Denote $K(z)$ the Gaussian curvature of $\varphi^*g$ in $U\subset \mathbb D$, which is equal to the holomorphic sectional curvature of $(\varphi(U)$. Therefore, by Proposition \ref{c3.5}, $K(0)=H(v)$.
\end{proof}

\section{A Schwarz lemma on almost Hermitian manifolds}
In this section, we prove Theorem \ref{T1.1}. Let $(M,J,g)$ and $(N,J_N,h)$ be two almost Hermitian manifolds and $f:M\lra N$ a $(J,J_N)$ holomorphic map. Denote $H_M$ and $H_N$ the holomorphic sectional curvatures of $M$ and $N$ and assume $H_M\geq K_1, H_N\leq K_2$ for some constants $K_1\leq 0, K_2<0$. Define the supreme norm of $df$ as $\|df\|_0$ by
\begin{align*}
\|df\|_0(p)=\sup_{\xi\neq 0}\dfrac{|df(\xi)|_h}{|\xi|_g},\ \ \ p\in M, \ \xi\in T_p^{1,0}M.\end{align*}
Our main object is to estimate $||df||_0$ under the conditions in Theorem \ref{T1.1}, since $||df||^2_0\leq \dfrac{K_1}{K_2}$ implies that $f^*h\leq \dfrac{K_1}{K_2} g$.
First by definition, for any $p\in M$ and any $\xi\in T_p^{1,0}M$, we have $|df(\xi)|_h\leq \|df\|_0(p)|\xi|_g$. By change of unitary frames, we may assume that there are local unitary (1,0) frames $\{e_1, e_2, ..., e_m\}$ near $p\in M$ and $\{\eta_1, \eta_2, ..., \eta_n\}$ near $q=f(p)\in N$ such that $df$ is given by $df(e_i)=f_i^\alpha\eta_\alpha$ with $f_i^\alpha= \lambda_i\delta_i^\alpha$ and $\lambda_1\geq \lambda_2\geq...\geq \lambda_r> \lambda_{r+1}=...=0$, and
$r$ being the rank of $(f_i^\alpha)$. Then $\|df\|_0=\lambda_1$ and $|df(e_1)|_h=\|df\|_0|e_1|_g$. We estimate $\lambda_1$ in the following.

Let $\xi_1=t e_1\in T_p^{1,0}M$ with $t>0$ small enough. By Proposition \ref{p3.8}, there is a $J$-holomorphic disc $\varphi:\mathbb D\rightarrow M$ with $\varphi(0)=p,\  \dfrac{\pa\varphi}{\pa x}(0)=Re(\xi_1)$ and the Gaussian curvature of $\varphi^*g$ at $0$ is $H_M(\xi_1)$.  Denote $$\varphi^*g=u^2dzd\bar{z}$$ for some positive function $u$ near $0$.
 As $H_M\geq K_1$, and the Gaussian curvature for $\varphi^*g$ is $- \dfrac{4}{u ^2}\dfrac{{\pa}^2\log u }{\pa z\pa\bar{z}} $, we have 
 \begin{align*}
- \dfrac{4}{u ^2}\dfrac{{\pa}^2\log u }{\pa z\pa\bar{z}}(0)\geq K_1.
 \end{align*}
Since both $\varphi$ and $f$ are holomorphic, if $\lambda_1(p)\neq 0$, there is a neighbourhood $U$ of $0$ in $\mathbb D$ such that$(U, f\circ \varphi)$ is an almost complex submanifold of $(N,J_N)$. Then we pull back $h$ to $U$ to get
\begin{align*}
\vp^*(f^*h)= v^2dzd\bar{z}.
\end{align*}
We have
\begin{align}
v(z)=|f_*\vp_*(\frac{\pa}{\pa z})|_h\leq \|df\|_0 |\vp_*(\frac{\pa}{\pa z})|_g=\lambda_1(\varphi(z)) u(z). \label{e4.1}
\end{align}
As $H_N\leq K_2$, by Corollary \ref{c3.5}, we get $$- \dfrac{4}{v ^2}\dfrac{{\pa}^2\log v }{\pa z\pa\bar{z}}(0)\leq K_2.$$
 Now let $\phi$ be an arbitrary smooth function on $M$. Recall that the complex Hessian of $\phi$ is 
$$(\na^2\phi)^{(1,1)}=\phi_{i\bar{j}}\theta^i\otimes \bar{\theta}^j+\phi_{\bar{i}j}\bar{\theta}^i\otimes\theta^j,$$
where $$\phi_{i\bar{j}}=\na^2\phi(e_i,\bar{e}_j)=e_i\bar{e}_j\phi-\na_{e_i}\bar{e}_j\phi=\phi_{\bar{j}i}.$$
We pull back $\phi$ to $\mathbb D$ by $\varphi$ and compute its differentials.
As $\vp$ is holomorphic, we may write $\vp_*(\frac{\pa}{\pa z})
=\vp^{i}_{ z}e_{i}$ and then $\vp_*(\frac{\pa}{\pa\bar{ z}})=\overline{\vp}^{i}_{ z}\bar{e}_{i}$, where $\overline{\vp}^{i}_{ z}$ is the complex conjugate of $\vp^{i}_{ z}$.
Then
\begin{align}
\dfrac{\pa \phi}{\pa\bar{ z}}=&\vp_*(\frac{\pa}{\pa\bar{ z}})\phi=\overline{\vp}^{j}_{ z}\bar{e}_j\phi \notag\\
\dfrac{\pa^2 \phi}{\pa z\pa\bar{ z}}=&\vp_*(\frac{\pa}{\pa z})(\bar{\vp}^{j}_{ z}\overline{e}_j \phi) \notag \\
=&\vp^{i}_{ z}\bar{\vp}^{j}_{ z}e_{i} e_{\bar{j}}\phi+\vp^{i}_{ z}(e_{i} \bar{\vp}^{j}_{ z})\overline{e}_j \phi. \label{4.1}
\end{align}
Since $[\vp_*\frac{\pa}{\pa z}, \vp_*\frac{\pa}{\pa\bar{ z}}]
=\vp_*[\frac{\pa}{\pa z}, \frac{\pa}{\pa\bar{ z}}]=0,$
we have 
\begin{align*}
\vp^{i}_{ z}(e_{i} \overline{\vp}^{j}_z)\overline{e}_{j}
+\vp^{i}_{ z} \overline{\vp}^{j}_z[e_{i}, \overline{e}_{j}]
- \overline{\vp}^{j}_z(\overline{e}_{j}\vp^{i}_{ z})e_{i}=0.
\end{align*}
As $
\na_{e_i}\bar{e}_j -\na_{\bar{e}_j} e_i=[e_i, \bar{e}_j]$, then $
\na_{e_i}\bar{e}_j =[e_i, \bar{e}_j]^{(0,1)}$.
It follows that 
\begin{align*}
\vp^{i}_{ z} \overline{\vp}^{j}_z\na_{e_i}\bar{e}_j=&\vp^{i}_{ z} \overline{\vp}^{j}_z[e_{i}, \overline{e}_{j}]^{(0,1)}=-\vp^{i}_{ z}(e_{i} \overline{\vp}^{j}_z)\overline{e}_{j}
\end{align*}
Putting it into (\ref{4.1}), we get \begin{align} \label{5.3} \dfrac{\pa^2 \phi}{\pa z\pa\bar{ z}}=\vp^{i}_{ z}\bar{\vp}^{j}_{ z}\phi_{i\bar{j}}.\end{align}
Now we finish the proof of Theorem \ref{T1.1}.

\begin{proof}[Proof of Theorem \ref{T1.1}]
Fix a point $o\in M$. Let $r(q)=d(o, q)$ be the distance function on $M$. For $a>0$, denote $$D_a=\{q\in M \  |\  r(q)<a\}, \ \ \ \ \overline{D}_a=\{q\in M \  |\  r(q)\leq a\}.$$ Define a function  $\rho$ on $D_a$ as 
$$\rho=(1-\dfrac{r^2}{a^2})\|df\|_0.$$
Then $\rho\geq 0$ on $\overline{D}_a$ and achieves its maximum at some $p\in D_a$. If $p$ is not a cut point of $o$, then $r$ is smooth at $p$. Otherwise, we can use the Calabi's trick to deal with it. Indeed, let $\gamma$ be a minimal unit speed geodesic from $o$ to $p$. Take a point $q_0$ on $\gamma$ which is not a cut point of $p$ and denote $b=d(o, q_0)$. For any $q\in M$, $\wtl{r}(q)=b+d(q_0, q)$ is smooth near $p$. Note that $\wtl{r}\geq r$ and $\wtl{r}(p)=r(p)$. Choose $\delta$ small enough such that the open geodesic ball $B_p(\delta)\subset D_a$ and $\wtl{r}(q)<a$ is smooth in $ B_p(\delta)$. It follows that $\wtl{\rho}=(1-\dfrac{\wtl{r}^2}{a^2})||df||_0\leq \rho$ and achieves the same maximum at $p$. Then we may use $\wtl{\rho}$ to replace $\rho$ and apply the maximum principle to $\wtl{\rho}$. So we just assume that $r$ is smooth at $p$.

Assume that $\|df\|_0(p)=\la_1(p)>0$ (otherwise, there is nothing to prove). Here $\lambda_1$ is the greatest eigenvalue of $df$ introduced previously. Let $\xi_1$ be an eigenvector of $\lambda_1$ at $p$ with $|\xi_1|_g$ small enough. By the previous argument, there is a $J$-holomorphic disc $\varphi:\mathbb D\rightarrow M$ such that $\varphi(0)=p,\  \dfrac{\pa\varphi}{\pa x}(0)=Re(\xi_1)$ and the Gaussian curvature of $\varphi^*g$ at $0$ is $H_M(\xi_1)$. Consider the function \begin{align}
\hat{\rho}(z)=\left(1-\dfrac{r^2(\vp(z))}{a^2}\right)\dfrac{v(z)}{u(z)}
\end{align} 
on $\mathbb D$. By (\ref{e4.1}), $v(z)\leq \lambda_1(\varphi(z))u(z)$ and $v(0)=\lambda_1(p)u(0)$. So $\hat{\rho}(z)\leq \rho(\varphi(z))$ and $\hat{\rho}(0)=\rho(\varphi(0))$. Therefore, $\hat{\rho}$
achieves its maximum at $z=0$. We have at $z=0$,
\begin{align*}
0\geq &\frac{\pa^2}{\pa z\pa\bar{ z}}\log \hat{\rho}\\
=&-2\frac{a^2+r^2}{(a^2-r^2)^2}\left|\frac{\pa r}{\pa \bar{z}}\right|^2-\frac{2r}{a^2-r^2}\frac{\pa^2 r}{\pa z\pa\bar{ z}}+\frac{\pa^2}{\pa z\pa\bar{ z}}\log\frac{v(z)}{u(z)}.
\end{align*}
Denote $r_i=e_ir, r_{\bar{i}}=\bar{e}_ir$. As $1=|\na r|^2_g=\sum_{i=1}^m|r_i|^2$, we have
\begin{align*}
\left|\frac{\pa r}{\pa\bar{ z}}\right|^2=|\sum_{i=1}^m\overline{\vp}^{i}_{ z}r_i|^2\leq (\sum_{i=1}^m|\vp^i_z|^2)(\sum_{i=1}^m|r_i|^2)=u^2(z).
\end{align*}
By (\ref{5.3}) and Theorem \ref{to},
\begin{align*}
\frac{\pa^2 r}{\pa z\pa\bar{ z}}=\vp^{i}_{ z}\bar{\vp}^{j}_{ z}r_{i\bar{j}}
\leq (\frac{1}{r}+C)g_{i\bar{j}}\vp^{i}_{ z}\bar{\vp}^{j}_{ z}
=(\frac{1}{r}+C)u^2(z).
\end{align*}
Therefore, at $z=0$,
\begin{align*}
0\geq -2\frac{a^2+r^2}{(a^2-r^2)^2}u^2(0)-(\frac{1}{r}+C)\frac{2r}{a^2-r^2}u^2(0)
-\frac{K_2}{4}v^2(0)+\frac{K_1}{4}u^2(0).
\end{align*}
As $K_2< 0$, 
 \begin{align*}
 \la_1^2(p)=&\dfrac{v^2(0)}{u^2(0)}\leq \dfrac{K_1}{K_2}-\frac{8(a^2+r^2)}{K_2(a^2-r^2)^2}-\frac{8(1+Cr)}{K_2(a^2-r^2)}.
 \end{align*}
 Thus for any $q\in D_a$,
 \begin{align*}
\rho^2(q)\leq& \left(\dfrac{a^2-r^2(p)}{a^2}\right)^2\left(\dfrac{K_1}{K_2}-\frac{16a^2}{K_2(a^2-r(p)^2)^2}-\frac{8(1+Ca)}{K_2(a^2-r(p)^2)}\right)\\
\leq & \dfrac{K_1}{K_2}-\frac{16}{K_2a^2}- \frac{8(1+Ca)}{K_2a^2}.
 \end{align*}
 So we obtain the estimate for $\rho$. Then for any $q\in D_a$, we get
 \begin{align*}
  \|df\|_0^2(q)\leq & \left(1-\dfrac{r^2(q)}{a^2}\right)^{-2} \left(\dfrac{K_1}{K_2}-\frac{16}{K_2a^2}- \frac{8(1+Ca)}{K_2a^2}\right)
 \end{align*}
Making $a\rightarrow +\infty$, we obtain $\|df\|_0^2(q)\leq \dfrac{K_1}{K_2}$, for any $q\in M$. The proof is complete.
\end{proof}
Now Corollary \ref{cor1.2} directly follows from the result and proof above. For Corollary \ref{cor1.3}, let $f:M\to M$ be any biholomorphism. Apply Theorem \ref{T1.1} to $f$ and $f^{-1} $ to get that $g\leq f^*g\leq g$. Therefore, $f$ is an isometry. As the isometry group of a compact Riemannian manifold is compact, the group of biholomorphisms is also compact, as a closed subset.

\end{document}